\DeclareMathAlphabet{\mathcal}{OMS}{cmsy}{m}{n}
\SetMathAlphabet{\mathcal}{bold}{OMS}{cmsy}{b}{n}
\def\bm{\boldsymbol}
\newcommand{\comment}[1]{}
\newcommand{\BEA}{\begin{eqnarray}}
\newcommand{\EEA}{\end{eqnarray}}
\newcommand\bR{\mathbb{R}}
\newcommand\cU{\mathcal{U}}
\newcommand\cX{\mathcal{X}}
\newcommand\ddf[2]{\frac{\mathrm{d} #1}{\mathrm{d} #2}}
\newcommand\ppf[2]{\frac{\partial #1}{\partial #2}}
\newtheorem{lemma}{Lemma}[section]
\newtheorem{theorem}{Theorem}[section]
\newtheorem{proposition}{Proposition}[section]
\newtheorem{definition}{Definition}[section]
\newtheorem{assumption}{Assumption}[section]
\newtheorem{remark}{Remark}
\newtheorem{example}{Example}[section]
\title{A model-free method for discovering symmetry in differential equations}
\author{
  Max Kreider \\
  Department of Mathematics \\
  The Pennsylvania State University, University Park, PA 16802, USA\\
  \texttt{mbk6295@psu.edu} \\
  \And
 John Harlim \\
  Department of Mathematics, Institute for Computational and Data Sciences \\
  The Pennsylvania State University, University Park, PA 16802, USA\\
  \texttt{jharlim@psu.edu} \\
  \And
 Daning Huang\\
  Department of Aerospace Engineering \\
  The Pennsylvania State University, University Park, PA 16802, USA\\
  \texttt{daning@psu.edu}
}
\begin{document}

\maketitle

\begin{abstract}

Symmetry in differential equations reveals invariances and offers a powerful means to reduce model complexity. 
Lie group analysis characterizes these symmetries through infinitesimal generators, which provide a local, linear criterion for invariance. 
However, identifying Lie symmetries directly from scattered data, without explicit knowledge of the governing equations, remains a significant challenge. 
This work introduces a numerical scheme that approximates infinitesimal generators from data sampled on an unknown smooth manifold, enabling the recovery of continuous symmetries without requiring the analytical form of the differential equations. 
We employ a manifold learning technique, Generalized Moving Least Squares, to prolongate the data, from which a linear system is constructed whose null space encodes the infinitesimal generators representing the symmetries. 
Convergence bounds for the proposed approach are derived. 
Several numerical experiments, including ordinary and partial differential equations, demonstrate the method’s accuracy, robustness, and convergence, highlighting its potential for data-driven discovery of symmetries in dynamical systems.

\end{abstract}

\section{Introduction}

A wide range of systems possess inherent symmetries, such as invariance under scaling, translation, or rotation.
These symmetries often encode fundamental physical or conservation laws which elucidate system behavior, and can reduce model complexity.
Examples arise across diverse fields, including particle physics \cite{costa2012symmetries, sundermeyer2014symmetries}, gravitational physics \cite{hall2004symmetries}, cosmology \cite{tsamparlis2018symmetries}, finance \cite{gazizov1998lie}, fluid dynamics \cite{cantwell2002introduction,crawford1991symmetry, grundland1995lie,holm1976symmetry, ibragimov2011applications}, and turbulence \cite{khujadze2016revisiting, klingenberg2020symmetries, she2017quantifying}.

This work analyzes Lie point symmetries (LPS), continuous transformation groups acting on the space of independent and dependent variables.
The LPS of differential equations (DEs) are characterized through Lie group theory, in which infinitesimal generators encode nonlinear symmetry transformations through a local, linearized invariance condition.
Lie theory provides a systematic framework for solving DEs, reducing the order of underlying dynamical systems, identifying conserved quantities, and generating new families of solutions from known solutions \cite{bluman2013symmetries,olver1993applications,ovsiannikov2014group}.

However, in modern applications, classical Lie group analysis may be infeasible because systems are often governed by highly complex DEs whose corresponding Lie groups may be analytically intractable, even with the aid of computer algebra systems.
Moreover, in experimental settings, the governing DEs are often unknown and one typically has access to DE solutions only in the form of scattered data.
These limitations motivate the development of data-driven approaches for discovering symmetries in DEs without requiring explicit knowledge of the underlying dynamical equations.

Data-driven methods have been developed for symmetry discovery in data distributions (e.g., a dataset of images) and algebraic functions \cite{dehmamy2021automatic,finzi2021practical,moskalev2022liegg,shaw2024symmetry}.  
Many works which address symmetry in DEs are limited to a particular class of DEs, such as Hamiltonian dynamics \cite{greydanus2019hamiltonian,hou2024machine,kaiser2018discovering,liu2022machine} or ordinary differential equations \cite{yang2023latent,yang2023generative}.  
Recent noteworthy studies address symmetry discovery in partial differential equations (PDEs) and general settings \cite{hu2025explicit,ko2024learning}.
In \cite{ko2024learning}, a neural network (NN) is used to model the infinitesimal generators of a Lie group to represent the LPS.  
While effective,
this approach lacks interpretability and relies on knowledge of the governing DEs. 
In contrast, \cite{hu2025explicit} improves interpretability by representing the infinitesimal generators in terms of  nonlinear features, such as polynomials. 
However, \cite{hu2025explicit} requires training an NN as a surrogate of the DEs, which lacks convergence guarantees and can be difficult to implement.
Furthermore, both \cite{hu2025explicit} and \cite{ko2024learning} use standard finite differences to estimate derivatives, which is impractical for non-uniform experimental data.

In this work, we develop a data-driven approach for identifying LPS directly from data, without requiring knowledge of the governing DEs. 
The method use a manifold learning technique, Generalized Moving Least Squares, to numerically prolongate the given data to its jet space. 
These estimated data are then substituted into a discretized infinitesimal invariance condition, producing a linear system whose null space encodes the infinitesimal generators of the underlying symmetry group.
While a similar strategy was used in \cite{hu2025explicit}, our work eliminates the need for expensive surrogate model training by introducing a novel reformulation of the infinitesimal invariance condition.  
By leveraging established results from manifold learning techniques, we also provide a rigorous convergence guarantee for the proposed algorithm.

The remainder of the manuscript is organized as follows.
In \S\ref{sec2}, we briefly review existing results in Lie group theory for symmetry analysis.
In \S\ref{sec3}, we introduce our numerical algorithm, and provide implementation details and pseudocode.
Error analysis and convergence results are presented in \S\ref{sec4}, followed by a range of numerical examples in \S\ref{sec5}. We conclude with a brief discussion in \S\ref{sec6}. Additional materials, including an overview of Lie group theory, symmetry in algebraic equations, supplementary computational details, GMLS pseudocode, and a detailed analysis of computational complexity, are provided in the Appendices.

\section{Lie group theory for symmetry analysis}\label{sec2}

Here, we briefly review classical symmetry analysis using Lie group theory; see \cite{olver1993applications} for a more detailed discussion.

\subsection{Lie group theory}

Let $G$ be a $r$-parameter local Lie group (see Appendix \ref{Appendix: Lie group theory definitions}). 
Let $\Phi: \Omega \to M$ be a smooth map corresponding to a local group of transformations on $M$, defined as $(g,x) \mapsto \Phi(g, x)$ for all $g\in G$ and $x\in M$, where $\{0\}\times M \subset \Omega\subset G\times M$. For $(g,x)\in\Omega$, the \textit{infinitesimal generator} of a Lie group $G$ is
 $X_g(x) = \left.\ddf{}{t}\right\vert_{t=0} \Phi(tg,x).$
Since $G$ is locally parameterized by $r$ parameters, we write $g=(g_1,\dots,g_r)$ to specify a particular direction along which the one-parameter subgroup $\Phi(tg,\cdot)$ acts on $M$.
Suppose that $M$ has a coordinate chart $(\mathcal{N},\varphi)$ with $\varphi:\mathcal{N}\to \bR^n$ and local coordinates $\varphi(x)=(x_1,x_2,\cdots,x_n)$. 
We write $\Phi=(\Phi_1,\Phi_2,\cdots,\Phi_n)$ to express the local group action $\Phi$ in these coordinates.
Under the assumption that $\Phi$ maps $\{0\}\times \mathcal{N}$ into $\mathcal{N}$, the infinitesimal generator has the coordinate expansion 
\begin{equation}\label{eqn_gen}
X_g(x) = \sum_{i=1}^n \xi_i(x) \partial_{x_i}\vert_x,\qquad \xi_i(x) = \left.\ppf{\Phi_i}{t}\right\vert_{t=0}.
\end{equation}
The notation $|_x$ indicates that each point $x$ has a distinct tangent space, and will be dropped in the remainder of the manuscript.
It follows that $X_g$ defines a vector field on $M$ in coordinates $x$.
Conversely, solving $\dot{x}_i = \xi_i(x)$, $i=1,2,\dots,n$, for $x\in M$ with $x(0)=x_0$ at $t$ gives a group action on $x_0$, i.e.,
$x(t) = \Phi(tg,x_0)$.
Therefore, the infinitesimal generator $X_g$ is an equivalent representation of the group action. 

\begin{example}\label{exp_so2}
Consider the Lie group $SO(2)$, which represents rotations in the plane.
Since $SO(2)$ is one-dimensional, we can write $g=1\in \mathbb{R}$ to represent the single group parameter direction without loss of generality.
We define $t=\theta$ to match the standard notation for the rotation angle in $\mathbb{R}^2$.
Then, for a point $x=(x_1,x_2)\in M=\bR^2$, the group action is $\Phi(\theta, x) = \left(\cos(\theta) x_1 -\sin(\theta) x_2,\; \sin(\theta) x_1 + \cos(\theta) x_2\right),$
which represents counter-clockwise rotation around the origin point by $\theta$.
The identity transformation corresponds to $\theta=0$.    
Following \eqref{eqn_gen}, the infinitesimal generator in coordinates $(x_1,x_2)$ is $X_g(x) = -x_2\partial_{x_1} + x_1\partial_{x_2}$.
The associated differential equation is $\dot{x}_1 = -x_2,\ \dot{x}_2 = x_1$, whose solutions are circles representing trajectories of rotation.  
Thus, the infinitesimal generator $X_g$ reproduces the $SO(2)$ group action.
\end{example}

\subsection{Symmetry in differential equations}\label{subsection: symmetry in DE}

We now review the classical symmetry analysis of DEs as an extension to the symmetries of algebraic equations (see Appendix \ref{Appendix: symmetry algebraic equations}).
We denote a system of differential equations as
\begin{equation}\label{eqn_de}
    \Delta_\nu(x,u,u^{(p)}) = 0,\quad \nu=1,2,\cdots,q,
\end{equation}
where $x\in\cX\subset\bR^n$ are independent variables, $u\in\cU\subset\bR^m$ are dependent variables, and $u^{(p)}$ are partial derivatives of $u$ up to $p$th order. 
For ease of notation, we define 
\begin{equation}
    z^{(k)} \equiv (x,u,u^{(k)}), \quad k=0,1,\dots,p.\notag
\end{equation}
A $r$th-order derivative of $u_i$ is denoted as
$u_{i,J} = \frac{\partial^r u_i}{\partial x_1^{j_1}\cdots\partial x_n^{j_n}}$,
using a multi-index $J=(j_1,j_2,\cdots,j_n)$, where $|J|=j_1+j_2+\cdots+j_n=r$ and $j_k\geq 0$ for $k=1,2,\cdots,n$.
For example, if $x=(s,t)\in \mathbb{R}^2$ and $u(x)\in\bR$, 
$u^{(1)}=\{u_s,u_t\}$ and $u^{(2)}=u^{(1)}\cup\{u_{ss}, u_{st}, u_{tt}\}$.  
In our notation, a PDE $u_t=u_{ss}$ is written as $\Delta(s,t,u;\ u_s,u_t;\ u_{ss},u_{st},u_{tt}) = u_t-u_{ss} = 0$.

The key to symmetry analysis for DEs is treating the partial derivatives $u^{(p)}$ as independent variables. 
Since the corresponding infinitesimal generators are defined only for the variables $(x,u)$, it is necessary to extend the group action to $u^{(p)}$. 
The extension of variables and group actions is referred to as \textit{prolongation}.
In the prolongated space, the study of symmetries of DEs mirrors the algebraic case (compare Theorem \ref{Theorem: algebraic symmetry} with Theorem \ref{theorem: DE symmetry} below).
We now review prolongation for DEs \cite{olver1993applications}. 

We first consider the prolongation of the variables in the original space $(x,u)\in M =\cX\times\cU \equiv M^{(0)}$.  
Let $u^{(p)} \in \mathcal{U}^{(p)}$, where $\mathcal{U}^{(p)}$ denotes the set of all partial derivatives of $u$ up to order $p$, treated as independent coordinates in a suitable Euclidean space.
The $p$th prolongated space, or \textit{jet space}, is defined as $M^{(p)}\equiv \cX\times\cU\times\cU^{(p)}$.
Note that the system \eqref{eqn_de} defines a \textit{solution manifold} $\mathscr{S}^{(p)}\subset M^{(p)}$, consisting of points $(x,u,u^{(p)})$ corresponding to a connected, smooth subset of the solution space of \eqref{eqn_de}.  
The precise subset should be determined on a case-by-case basis dependent on the PDE and the goals of the analysis.  
Moreover, for each $k=0,1,\dots,p$, we define the $k$th prolongated solution manifold
\begin{equation}
    \mathscr{S}^{(k)} = \bigl\{ (x,u,u^{(k)}) \in M^{(k)} : (x,u) \in \mathscr{S}^{(0)} \bigr\}.\notag
\end{equation}
We assume that $\mathscr{S}^{(p)}$ is a $\mathcal{C}^{\ell+1}$ submanifold of $M^{(p)}$.
Notice that the dimension, $D_k$, of $M^{(k)}$ depends on $k$:
\begin{equation}
    D_k = \text{dim}\, M^{(k)} =  n + m + \sum_{r=1}^k \binom{n+r-1}{r}m.\notag
\end{equation}
However, the intrinsic dimension, $d$, of $\mathscr{S}^{(k)}$ is independent of $k$: $d = \text{dim}\, \mathscr{S}^{(k)}$, $k=0,1,\dots,p$.
The intrinsic dimension of the solution manifold is preserved under prolongation because the coordinates $u^{(k)}$ are uniquely determined by the values of $(x,u)$ along $\mathscr{S}^{(0)}$.

Next, we consider the prolongation of the infinitesimal generators.  
We write the infinitesimal generator at the point $z^{(0)}\in \mathscr{S}^{(0)}\subset M^{(0)}$ in the original coordinates as
\begin{equation}
    X_g = \sum_{i=1}^{n} \xi_i(z^{(0)}) \partial_{x_i} + \sum_{i=1}^{m} \eta_i(z^{(0)}) \partial_{u_i} \equiv X_g(z^{(0)})\cdot\nabla.\notag
\end{equation}
The $p$th \textit{prolongated generator} at the point $z^{(p)} \in \mathscr{S}^{(p)}$ in prolongated coordinates is 
\begin{equation}\label{eqn_pro}
X_g^{(p)} = L^{(p)}X_g \equiv  X_g + \sum_{i=1}^{m} \sum_{|J|=1}^p \eta_{i,J}(z^{(p)}) \partial_{u_{i,J}} \equiv X_g^{(p)}(z^{(p)})\cdot\nabla^{(p)},
\end{equation}
We assume the functions $\xi_i,\eta_i$ are $\mathcal{C}^{\ell+1}$ functions with $\ell\geq p$.
Here, $L^{(p)}: TM^{(0)} \to TM^{(p)}$ is the prolongation operator, mapping a vector field on the tangent bundle $TM$ of $M$ to a vector field on the tangent bundle $TM^{(p)}$ of the prolongated space $M^{(p)}$. 
It acts on $X_g$ to produce its $p$th prolongation.
The functions $\eta_{i,J}$ are given by
\begin{equation}\label{eq: prolongation formula}
    \eta_{i,J} = D_J\left( \eta_i - \sum_{k=1}^n\xi_k\ppf{u_i}{x_k} \right) + \sum_{k=1}^n \xi_k\ppf{u_{i,J}}{x_k},
\end{equation}
where $D_J$ is a composition of total derivative operators (see \cite{olver1993applications}, \S2.3 for more details)
\begin{equation}
    D_J=D_{j_1}D_{j_2}\cdots D_{j_n}, \quad \text{with}\ D_i = \ppf{}{x_i}+\sum_{j=1}^m \sum_{|J|=1}^p \ppf{u_{j,J}}{x_i}\ppf{}{u_{j,J}}.\notag
\end{equation}
The following theorem provides a systematic method to study DE symmetry groups.
\begin{theorem}[Theorem 2.31 in \cite{olver1993applications}]\label{theorem: DE symmetry}
    Consider a system of differential equations of the form \eqref{eqn_de} with maximal rank defined over $M\subset \cX\times \cU$.
    If $G$ is a local group of transformations acting on $M$, and 
    \begin{equation}
        X_g^{(p)}[\Delta_\nu](z^{(p)}) = 0, \quad \nu=1,\dots,q, \quad \text{ whenever } \Delta_\nu(z^{(p)})=0, \text{  i.e.,~} z^{(p)}\in \mathscr{S}^{(p)},\notag
    \end{equation}
    for every infinitesimal generator $X_g$ of $G$, then $G$ is a symmetry group of the system. 
\end{theorem}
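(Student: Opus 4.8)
The plan is to lift the problem to the jet space $M^{(p)}$, apply there the infinitesimal criterion of invariance for submanifolds (Theorem~\ref{Theorem: algebraic symmetry}), and then push the resulting invariance statement back down to the base space $M$ using the correspondence between prolonged group actions and prolongations of solution graphs. Only sufficiency is claimed, so the argument runs in one direction.

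First I would recall that the local action $\Phi$ of $G$ on $M = M^{(0)}$ canonically induces a prolonged local action $\Phi^{(p)}$ of $G$ on the jet space $M^{(p)}$: for $g$ near the identity, $\Phi^{(p)}(g,\cdot)$ sends the $p$-jet at $x$ of a smooth map $f$ to the $p$-jet, at the transformed base point, of the transformed map $g\cdot f$ — which is well defined because, for $g$ sufficiently close to the identity, the implicit function theorem guarantees that the transformed graph is again the graph of a smooth function. The key fact, which follows from the prolongation formula \eqref{eq: prolongation formula}, is that the infinitesimal generator of $\Phi^{(p)}$ is precisely the prolonged generator $X_g^{(p)} = L^{(p)}X_g$ of \eqref{eqn_pro}; equivalently, the flow of the vector field $X_g^{(p)}$ on $M^{(p)}$ is $t \mapsto \Phi^{(p)}(tg,\cdot)$. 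I would import this as a cited lemma (\cite{olver1993applications}, \S2.3) rather than rederive it.

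Next I would apply the algebraic criterion in jet space and then descend. The maximal-rank hypothesis on $\Delta = (\Delta_1,\dots,\Delta_q)$ ensures that $\mathscr{S}^{(p)} = \{ z^{(p)} \in M^{(p)} : \Delta_\nu(z^{(p)}) = 0 \}$ is a regular submanifold of $M^{(p)}$. The hypothesis that $X_g^{(p)}[\Delta_\nu](z^{(p)}) = 0$ for all $z^{(p)} \in \mathscr{S}^{(p)}$ and every generator $X_g$ says exactly that each $X_g^{(p)}$ is tangent to $\mathscr{S}^{(p)}$, so Theorem~\ref{Theorem: algebraic symmetry} gives that $\mathscr{S}^{(p)}$ is invariant under the connected local group generated by the $X_g^{(p)}$, i.e. under $\Phi^{(p)}(g,\cdot)$ for $g$ near the identity. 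Now let $u = f(x)$ be any local solution of \eqref{eqn_de}, so that its $p$-jet graph $\Gamma_f^{(p)} = \{ (x, f(x), f^{(p)}(x)) \}$ lies in $\mathscr{S}^{(p)}$. For $g$ near the identity the transformed graph $\Phi(g,\Gamma_f)$ is again the graph of a smooth function $\tilde f$, and by the very construction of $\Phi^{(p)}$ we have $\Gamma_{\tilde f}^{(p)} = \Phi^{(p)}(g,\Gamma_f^{(p)})$; invariance of $\mathscr{S}^{(p)}$ then forces $\Gamma_{\tilde f}^{(p)} \subset \mathscr{S}^{(p)}$, i.e. $\tilde f$ solves \eqref{eqn_de}, which is precisely the statement that $G$ is a symmetry group.

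I expect the main obstacle to be the bookkeeping in the first step: rigorously verifying that $\Phi^{(p)}$ is a genuine local group action — compatibility of the shrinking domains in $g$, and the fact that transformed graphs remain graphs so that taking jets makes sense — and that its infinitesimal generator coincides with \eqref{eq: prolongation formula}. This is the technical heart of prolongation theory; because it is standard, in the write-up I would state it as a cited lemma and keep the proof of this theorem focused on the short reduction to Theorem~\ref{Theorem: algebraic symmetry} and the descent to solution graphs.
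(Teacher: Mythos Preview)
The paper does not prove this theorem; it is quoted from \cite{olver1993applications} (Theorem~2.31 there) and immediately followed by an illustrative example, with no proof given. Your proposal is correct and is in fact the standard argument from Olver: reduce to the algebraic invariance criterion (the paper's Theorem~\ref{Theorem: algebraic symmetry}) applied in jet space to the prolonged action, then translate invariance of $\mathscr{S}^{(p)}$ back to the statement that transformed solution graphs remain solution graphs. Your identification of the technical load---well-definedness of the prolonged action and the identity $X_g^{(p)} = L^{(p)}X_g$ for its generator---and your decision to import that as a cited lemma match how Olver structures the proof.
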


We now illustrate Theorem \ref{theorem: DE symmetry} with an example, which continues Example \ref{exp_so2}.
\begin{example}\label{exp_de}
Consider the system of differential equations $\Delta_\nu = 0$, $\nu=1,2$,
\begin{equation}\label{eq: SL oscillator}
    \begin{split}
        \Delta_1(t,x,y,x_t,y_t) &= -x(x^2+y^2-1)+y - x_t = 0,
        \\
        \Delta_2(t,x,y,x_t,y_t) &= -y(x^2+y^2-1)-x - y_t = 0.
    \end{split}
\end{equation}
Here, we will take it as a fact that \eqref{eq: SL oscillator} has the following independent infinitesimal generators
\begin{equation}
        X_{g,1}(t,x,y) =  \partial_t, \quad X_{g,2}(t,x,y) = (- y) \partial_x + ( x) \partial_y.\notag
\end{equation}
We remark that $X_{g,1}$ corresponds to a trivial time-translation symmetry 
and $X_{g,2}$ represents an $SO(2)$ symmetry in solutions of ODEs. Using \eqref{eqn_pro}, one can show that the 1st prolongation of these generators is
\begin{equation}
        X_{g,1}^{(1)}(t,x,y,x_t,y_t) = X_{g,1} + 0, \quad X_{g,2}^{(1)}(t,x,y,x_t,y_t) = X_{g,2} +  (- y_t) \partial_{x_t} + (x_t) \partial_{y_t} .
        \notag
\end{equation}
Note that the Jacobian Matrix, $D\Delta_\nu$, given by
\begin{equation}
    D\Delta_\nu = \begin{bmatrix}
        0 & -3x^2-y^2+1 & -2xy+1 & -1 & 0
        \\
        0 & -2xy-1 & -3y^2 - x^2 + 1 & 0 & -1
    \end{bmatrix},\notag
\end{equation}
is clearly of maximal rank.
One can show that $X_{g,i}^{(1)}[\Delta_\nu] = 0$ for $i=1,2$ and $\nu=1,2$ whenever $\Delta_1(t,x,y,x_t,y_t)=\Delta_2(t,x,y,x_t,y_t) = 0$.
Then, invoking Theorem \ref{theorem: DE symmetry}, the equation $\Delta_\nu(t,x,y,x_t,y_t)=0$ has $SO(2)$ rotational symmetry.
Indeed, one can show that \eqref{eq: SL oscillator} reduces to $r' = -r(r^2-1)$, $\theta'=1$ in polar coordinates.
\end{example}

\subsection{Discovering symmetries in differential equations}\label{subsec: discovering symmetries in DEs}

The previous section (see Theorem \ref{theorem: DE symmetry}) focused on verifying whether a given Lie group is a symmetry group of a DE, assuming the group and its infinitesimal generators are known. 
Conversely, given a DE, it is possible to recover the generators of all possible symmetry groups by formulating and solving a PDE.
Indeed, an infinitesimal generator $X_g$ is related to its prolongated version by the prolongation operator $L^{(p)}$ (recall equation \eqref{eqn_pro}).
The $G$-invariance condition for the solution set of the DE is
\begin{equation}\label{eqn_sym}
X_g^{(p)}[\Delta_\nu]\equiv (X_g^{(p)}\cdot\nabla^{(p)})[\Delta_\nu] = L^{(p)}X_g\cdot\nabla^{(p)}[\Delta_\nu] = 0,
\end{equation}
whenever $\Delta_\nu = 0, \nu=1,\cdots,q$, which produces a system of $q$ PDEs as the governing equations of the generators for the symmetry groups of the DE $\Delta_\nu=0$.  
We refer to \eqref{eqn_sym} as the \textit{(infinitesimal) invariance condition} for a system of DEs $\Delta_\nu=0$.
If \eqref{eqn_sym} does not admit a solution $X_g$, then the DE does not have symmetry in the sense discussed in this study.

While \eqref{eqn_sym} is linear in the infinitesimal generator $X_g$, its components $\xi_i$ and $\eta_i$ can be highly nonlinear functions of $(x,u)$. 
Hence, \eqref{eqn_sym} can be challenging to solve analytically.
A typical approach is to express $\xi_i$ and $\eta_i$ in a finite-dimensional basis
\begin{equation}\label{eq: ansatz}
\begin{split}
    \xi_i(z^{(0)}) &= \sum_{j=1}^\kappa \psi_{j}(z^{(0)})c_{ij} \equiv \psi(z^{(0)})^\top c_i,\quad i=1,2,\cdots,n,
\\
\eta_i(z^{(0)}) &= \sum_{j=1}^\kappa\psi_{j}(z^{(0)})c_{i+n,j} \equiv \psi(z^{(0)})^\top c_{i+n},\quad i=1,2,\cdots,m,
\end{split}
\end{equation}
where $\psi_j(z^{(0)})$ are chosen basis functions and the $c_{ij}$ are unknown coefficients. 
Collecting all coefficients in a vector $\mathbf{c}$, the generator can be written compactly as
\begin{equation}
    X_g = \sum_{i=1}^{n} \xi_i(z^{(0)}) \partial_{x_i} + \sum_{i=1}^{m} \eta_i(z^{(0)}) \partial_{u_i} \equiv \left(\Psi(z^{(0)})\mathbf{c}\right)\cdot \nabla,\notag
\end{equation}
where we introduce the notation
\begin{equation}\label{eq: Psi}
\begin{split}
    \Psi(z^{(0)}) &= \begin{bmatrix}
    \psi(z^{(0)}) & & & \\
     & \ddots & & \\
     & & \psi(z^{(0)})
\end{bmatrix}\in\bR^{(n+m)\times K},
\end{split}
\end{equation}
and $\mathbf{c} = [c_1^\top,\cdots,c_n^\top,c_{n+1}^\top,\cdots,c_{n+m}^\top]^\top \in \bR^{K\times 1}$. Here, $K = \kappa(n+m)$ is the number of unknown coefficients.
This procedure reduces \eqref{eqn_sym} to a finite-dimensional linear system for the coefficients $c_j$, $j=1,2,\dots,n+m$,
\begin{equation}\label{eq: matrix P}
(L^{(p)}\Psi \mathbf{c}\cdot\nabla^{(p)})[\Delta_\nu] = \mathbf{c}^\top \Big((L^{(p)}\Psi)^\top(\nabla^{(p)}[\Delta_\nu])\Big) \equiv \mathbf{c}^\top P_\nu = 0, \quad P\equiv \begin{bmatrix}
    P_1 & \dots & P_q
\end{bmatrix}.
\end{equation}
Note that $L^{(p)}\Psi$ is a $D_p\times K$ matrix-valued function and  $\nabla^{(p)}[\Delta_\nu] \in \mathbb{R}^{D_p\times q}$ has as its $j$th column the gradient $\nabla^{(p)}[\Delta_j]$.
Consequently, $P$ is a ${K\times q}$ matrix-valued function.

\begin{remark}\label{Remark: inf dim symmetry} 
Some differential equations have an infinite-dimensional family of solutions parameterized by one or more arbitrary functions. 
For example, the transport equation $u_t + u_x = 0$ admits the general solution $u = f(x - t)$, where $f$ is an arbitrary smooth function. 

It is important to note that the finite-dimensional formulation \eqref{eq: matrix P} only captures point symmetries that can be expressed within the finite ansatz \eqref{eq: ansatz}, and so our approach excludes infinite-dimensional families of symmetries.
Recovering functional symmetries is beyond the scope of this work.
More details can be found in \cite{olver1993applications}.
\end{remark}

It follows that the basis of the nullspace of the operator $P$, i.e., the set of $c$ such that $c^TP(z^{(p)})=0$ for all $\nu$ and for all $z^{(p)}\in \mathscr{S}^{(p)}$, recovers a finite-dimensional subset of the generators of the symmetry group. 
In this work, we assume that the ansatz \eqref{eq: ansatz} is sufficiently robust to capture symmetries of interest; see \S\ref{sec4} for a discussion of possible error due to this assumption. 
Appendix \ref{Appendix: example} contains a detailed example clarifying the notation used in this section.

\begin{example}\label{ex: SL}
Here, we revisit Example \ref{exp_de}, but suppose that the symmetry group is unknown. 
We adopt a linear ansatz for the infinitesimal generator
\BEA
    X_g &=& \xi_1(t,x,y)\partial_t + \eta_1(t,x,y)\partial_x + \eta_2(t,x,y)\partial_y  \notag
    \\
    &\equiv& (c_0+c_1 t+c_2x + c_3y)\partial_t + (c_4+c_5t+c_6 x + c_7 y)\partial_x + (c_8+c_9t+c_{10} x + c_{11} y)\partial_y.
\notag
\EEA
The form of the prolongated generator is completely determined by the generator in original coordinates.
Using \eqref{eqn_pro}, one sees that
\begin{equation}
    X_g^{(1)} = X_g + \eta_{1,1}(t,x,y,x_t,y_t)\ppf{}{x_t} + \eta_{2,1}(t,x,y,x_t,y_t)\ppf{}{y_t}, \notag
\end{equation}
with
\BEA     \eta_{1,1} &=& (\eta_1)_t + (\eta_1)_x x_t + (\eta_1)_y y_t - (\xi_1)_t x_t - (\xi_1)_x (x_t)^2 - (\xi_1)_y x_t y_t \notag\\ 
     &=& c_5 + c_6 x_t + c_7 y_t - c_1 x_t - c_2 (x_t)^2 - c_3 x_t y_t,\notag
    \\
    \eta_{2,1} &=& (\eta_2)_t + (\eta_2)_x x_t + (\eta_2)_y y_t - (\xi_1)_t y_t - (\xi_1)_y (y_t)^2 - (\xi_1)_x x_t y_t \notag\\
    &=& c_9 + c_{10} x_t + c_{11} y_t - c_1 y_t - c_3 (y_t)^2 - c_2 x_t y_t. \notag
\EEA
Subsequently, consider $X_g^{(1)}[\Delta_\nu]=0$ with the constraint $\Delta_1=\Delta_2=0$ to obtain a linear system in the $c_i$, $i=1,2,\dots,11$.
One looks for $c_i$ that satisfy this system for any $(t,x,y)$, giving $c_1=c_2=c_3=c_4=c_5=c_6=c_8=c_9=c_{11} = 0$, and $-c_7 = c_{10}$.
This computation reveals two degrees of freedom: $c_0$ is free, and $-c_7 = c_{10}$.
Setting $c_0=-c_7 = 1$, one recovers $X_{g,1}^{(1)}(t,x,y,x_t,y_t) = \partial_t$ and $X_{g,2}^{(1)}(t,x,y,x_t,y_t) = (- y) \partial_x + ( x) \partial_y +  (- y_t) \partial_{x_t} + ( x_t) \partial_{y_t}$ as in Example \ref{exp_de}.
\end{example}

\section{Numerical approximation}\label{sec3}

The previous section reviewed the classical theory for studying the symmetry groups of differential equations when the underlying dynamical system is explicitly known. 
In this section, we introduce a numerical framework to approximate the symmetry groups of a differential equation when only scattered solution data are available (see Fig.~\ref{fig:sym}). 
We outline the core idea of our algorithm, and subsequently provide implementation details and pseudocode.
A detailed error analysis is given in \S\ref{sec4}.

Suppose we are given data points $\mathbf{z}_i^{(0)}=\{(\mathbf{x}_i,\mathbf{u}_i)\}_{i=1}^N \in \mathscr{S}^{(0)} \subset \mathcal{X}\times \mathcal{U} \subset  \mathbb{R}^{n+m}$ lying on the zero set of $\Delta_\nu$. 
We now formulate a discrete version of the invariance condition \eqref{eq: matrix P} using the available solution data.
Direct evaluation of $P$ is not possible from the data alone, since both $L^{(p)}\Psi$ and $\nabla^{(p)}[\Delta_\nu]$ depend on the prolongated coordinates $u^{(p)}$ in jet space. 
Our approach is to approximate these objects numerically.

First, using standard manifold learning techniques, we prolongate the given data $\mathbf{z}_i^{(0)}$ to obtain approximations $\tilde{\mathbf{u}}^{(p)}_i$ to the derivatives $u^{(p)}(\mathbf{x}_i)$ at each sample point.
These estimates are then used to evaluate 
\begin{equation}\label{eq: L}
    \tilde{\mathbf{L}}_i \equiv L^{(p)}\Psi(\mathbf{x}_i,\mathbf{u}_i,\tilde{\mathbf{u}}_i^{(p)}) \in \mathbb{R}^{D_p\times K},
\end{equation}
which approximates the ground truth $L^{(p)}\Psi(\mathbf{x}_i,\mathbf{u}_i,\mathbf{u}_i^{(p)})$ at each sample point. 
Second, rather than reconstructing $\Delta_\nu$ directly, we observe that $\nabla^{(p)}[\Delta_\nu]$ is normal to the zero set of $\Delta_\nu$, which we assume forms a smooth $d$-dimensional submanifold of the jet space $M^{(p)}$.  
This geometric viewpoint allows us to approximate the normals to the data manifold directly, producing estimates 
\begin{equation}\label{eq: S}
    \tilde{\mathbf{S}}_i = \begin{bmatrix}
        \tilde{\mathbf{S}}_{i,1} & \dots & \tilde{\mathbf{S}}_{i,D_p-d}
    \end{bmatrix}\in \mathbb{R}^{D_p\times (D_p- d)}, \quad
    \tilde{\mathbf{S}}_{i,\nu} \equiv  \nabla^{(p)}[\Delta_\nu](\mathbf{x}_i,\mathbf{u}_i,\tilde{\mathbf{u}}_i^{(p)})\in \mathbb{R}^{D_p}.
\end{equation}
Note that we write $D_p - d$ instead of $q$ because the underlying system of equations is unknown in this context. 
Combining these pieces, we construct a pointwise approximation of $P$ at each data point 
\begin{equation}
\tilde{\mathbf{p}}_{i} = (\tilde{\mathbf{L}}_i)^\top \tilde{\mathbf{S}}_{i} \in \mathbb{R}^{K\times (D_p- d)},\quad
\label{eq: stacked matrix}
\tilde{\mathbf{P}} =
\begin{bmatrix}
\tilde{\mathbf{p}}_{1} &
\tilde{\mathbf{p}}_{2} & \dots & \tilde{\mathbf{p}}_{N}
\end{bmatrix}
\in \mathbb{R}^{K\times N(D_p- d)}.
\end{equation}
The numerical nullspace of $\tilde{\mathbf{P}}$  approximates the desired infinitesimal generators. 

To complete the construction of $\tilde{\mathbf{P}}$, we must approximate the prolongated coordinates ${u}^{(p)}$.
These coordinates can be recovered from knowledge of the tangent vectors at the data points in the $k$th jet space $M^{(k)}$, for $k=0,1,\dots,p$.
We adopt an iterative procedure in which, at each level $k$, we estimate the tangent vectors using a combination of the singular value decomposition (SVD) and generalized moving least squares (GMLS) techniques.
This approach produces successively refined approximations of the derivatives, ultimately yielding the prolongated coordinates $\tilde{\mathbf{u}}^{(p)}_i$ necessary for constructing the pointwise approximations $\tilde{\mathbf{L}}_i$ and $\tilde{\mathbf{S}}_i$.

\begin{remark}
    In our numerical framework, we assume that the given data is sampled from a smooth, finite-dimensional subset of the solution manifold 
        $\mathbf{z}_i^{(0)}\in \mathscr{S}^{(0)} \subset M^{(0)}.$
    Our numerical prolongation procedure will embed this subset into the higher-dimensional jet space $M^{(p)}$, yielding approximations $\tilde{\mathbf{z}}_i^{(k)}=(\mathbf{z}_i^{(0)},\tilde{\mathbf{u}}_i^{(k)})$ to the true data points $\mathbf{z}_i^{(k)} = (\mathbf{z}_i^{(0)},\mathbf{u}^{(k)}_i)\in \mathscr{S}^{(k)}\subset M^{(k)}$ for $k=1,2,\dots,p$.
\end{remark}

\subsection{Tangent vector approximation via SVD}

Suppose we are given scattered data $\{\mathbf{y}_i\}_{i=1}^N\subset \mathbb{R}^n$ sampled from a smooth, $d$-dimensional manifold $\mathscr{S}$.
Let $\mathcal{K}_i = \{\mathbf{y}_{i_1},\dots,\mathbf{y}_{i_{\mathfrak{K}}}\}$ be the set of $\mathfrak K$-nearest neighbors of $\mathbf{y}_i$, with the convention that $\mathbf{y}_i = \mathbf{y}_{i_1}$.
It is possible to obtain a local approximation of the tangent space at each data point using a classical SVD-based method \cite{donoho2003hessian, harlim2023radial, tyagi2013tangent, zhang2025geometric, zhang2004principal} which we now review.
First, one constructs a distance matrix $\mathbf{D}_i \in \mathbb{R}^{n \times \mathfrak K}$ whose $j$th column is the difference $\mathbf{y}_{i_j}-\mathbf{y}_i$.
Next, one computes the SVD of the distance matrix, $\mathbf{D}_i = \mathbf{U}_i \mathbf{\Sigma}_i \mathbf{V}_i$.
Provided that $d<\mathfrak K\ll N$, one expects to recover only $d$ non-trivial singular values.
The corresponding left singular vectors, $\tilde{\mathbf{t}}_i^{(j)}\in \mathbb{R}^n$, $j=1,\dots,d$, give an orthonormal basis, $\tilde{\mathbf{T}}_i = \{\tilde{\mathbf{t}}_i^{(1)},\dots,\tilde{\mathbf{t}}_i^{(d)}\}$, for the approximate tangent space $\widetilde{T_{\mathbf{y}_i}\mathscr{S}}$.
The remaining left singular vectors form an orthonormal basis for the normal space, $\tilde{\mathbf{N}}_i = \{\tilde{\mathbf{n}}_i^{(1)},\dots,\tilde{\mathbf{n}}_i^{(n-d)}\}$.
Further discussion of the implementation and error analysis of this first-order local SVD method can be found in \S3 of \cite{harlim2017parameter}, which reports that the tangent-space estimation error scales as $\mathcal{O}(N^{-1/d})$ for a $d$-dimensional manifold. 

\subsection{Tangent vector refinement via GMLS}

When higher accuracy is required, the Generalized Moving Least Squares (GMLS) method can improve the SVD approximation of the local tangent space \cite{gross2020meshfree,jiang2024generalized,liang2013solving,mirzaei2012generalized, zhang2025geometric}.
Consider the $i$th data point $\mathbf{y}_i$ along with its $\mathfrak K$-nearest neighbors $\mathcal{K}_i$, and an SVD approximation of the tangent space, $\tilde{\mathbf{T}}_i$, and normal space, $\tilde{\mathbf{N}}_i$, at $\mathbf{y}_i$.
For convenience, collect the SVD tangent and normal vectors in a matrix $\tilde{\mathbf{Q}}_i = [\tilde{\mathbf{t}}_i^{(1)},\dots,\tilde{\mathbf{t}}_i^{(d)},\tilde{\mathbf{n}}_i^{(1)},\dots,\tilde{\mathbf{n}}_i^{(n-d)}]\in \mathbb{R}^{n \times n}$, and define the distance matrix $\mathbf{D}_i$ as before. 
Project the displacement vectors onto tangent-normal space by computing $\bm{\mathcal{Q}}_i=\tilde{\mathbf{Q}}_i^\top \mathbf{D}_i \in \mathbb{R}^{n \times \mathfrak K}$.
Note that the columns of $\bm{\mathcal{Q}}_i$, $\{\mathbf{q}_w \in \mathbb{R}^n\}_{w=1}^{\mathfrak K}$, provide a discrete sampling of the manifold in local tangent–normal coordinates.
Let $\bm{\tau}_w$ be the first $d$-components of $\mathbf{q}_w$, i.e., the tangent components, which act as approximate local coordinates on the tangent plane.
Let $\mathbf{s}_w$ be the last $n-d$ components of $\mathbf{q}_w$, i.e., the normal components, which describe how the manifold deviates from the tangent plane.

The GMLS method works by iteratively refining the local representation of the manifold so as to more accurately capture its geometry.  
The manifold can be modeled locally as the graph of a smooth function from the tangent space to the normal space.  
To that end, we represent the manifold locally using a vector-valued polynomial $\tilde \pi_i:\widetilde{T_{\mathbf{y}_i}\mathscr{S}} \to \mathbb{R}^{n-d}$ of degree $\ell$,
\begin{equation}
\tilde \pi_i(\bm{\tau}) = 
\begin{bmatrix} \tilde \pi_i^{(1)}(\bm{\tau}) \\ \vdots \\ \tilde \pi_i^{(n-d)}(\bm{\tau}) \end{bmatrix}, 
\quad \tilde \pi_i^{(r)}(\bm{\tau}) = \sum_{j=1}^Y \beta_{i,j}^{(r)} \, \phi_j(\bm{\tau}), 
\quad r = 1,\dots,n-d,\notag
\end{equation}
where $\Phi(\bm{\tau}) = \{\phi_j(\bm{\tau})\}_{j=1}^Y$ is an ordered monomial basis of $\mathbb{P}^{\ell,d}_{\mathbf{y}_i}$, the space of intrinsic polynomials with degree up to $\ell$ in $d$ variables,
    \begin{equation}
        \Phi(\bm{\tau}) = \{ \bm{\tau}^\gamma : |\gamma| \le \ell \}, \quad \bm{\tau}^\gamma = \tau_1^{\gamma_1} \cdots \tau_d^{\gamma_d}.\notag
    \end{equation}
    Here, $Y = \dim(\mathbb{P}^{\ell,d}_{\mathbf{z}_i}) = \binom{\ell+d}{d}$ is the number of monomials, and  $\beta_{i,j}^{(r)} \in \mathbb{R}$ are unknown coefficients.  
Given the tangent coordinates $\bm{\tau}_w \in \mathbb{R}^d$ and normal deviations $\mathbf{s}_w \in \mathbb{R}^{n-d}$ of the $\mathfrak K$-nearest neighbors, we determine the coefficients $\beta_{i,j}^{(r)}$ by solving the least-squares problem
\begin{equation}\label{eq: least squares problem}
\Big\{\beta_{i,j}^{(r)}\Big\}_{j=1,\dots,Y}^{r=1,\dots,n-d} 
= \arg \min_{\beta} \sum_{w=1}^{\mathfrak K} \big\| \mathbf{s}_w - \tilde \pi_i(\bm{\tau}_w) \big\|_2^2.
\end{equation}
We then define a local coordinate chart for the manifold near the point $\mathbf{y}_i$ by considering the embedding map $\tilde \iota_i : \widetilde{T_{\mathbf{y}_i}\mathscr{S}} \to \mathbb{R}^n$
\begin{equation}\label{eq: embedding map}
    \tilde \iota_i(\bm{\tau}) = \mathbf{y}_i + \tilde{\mathbf{T}}_i \bm{\tau} + \tilde{\mathbf{N}}_i \tilde \pi_i(\bm{\tau}),
\end{equation}
which gives a GMLS-based local parameterization of the manifold near $\mathbf{y}_i$.
The GMLS approximation of the tangent vectors near $\mathbf{y}_i$ is obtained by differentiating \eqref{eq: embedding map} 
\begin{equation}\label{eq: GMLS derivative}
   \hat{\mathbf{T}}_i(\bm{\tau}) \equiv D\tilde \iota_i(\bm{\tau}) =  \tilde{\mathbf{T}}_i + \tilde{\mathbf{N}}_i \Big(D\tilde \pi_i(\bm{\tau})\Big),
\end{equation}
where $D\tilde \pi_i(\bm{\tau})\in \mathbb{R}^{(n-d)\times d}$ is the Jacobian of $\tilde \pi_i(\bm{\tau})$.
The approximated tangent space at $\mathbf{y}_i$ is given by $\hat{\mathbf{T}}_i \equiv \hat{\mathbf{T}}_i(0)$.

\begin{remark}\label{remark: embedding}
Let $\iota : \mathbb{R}^d \to \mathbb{R}^n$ denote the true embedding of the manifold such that $\mathbf{y}_i = \iota(\bm{s}_i)$ for some intrinsic coordinates $\mathbf{s}_i\in \mathbb{R}^d$.  
The corresponding tangent frame at $\mathbf{y}_i$ is given by $\mathbf{T}_i = D\iota(\mathbf{s}_i) \in \mathbb{R}^{n\times d}$.
The GMLS approximation $D\tilde\iota_i(0)$ approximates $D\iota(\mathbf{s}_i)$, with pointwise error
$\mathbf{T}_i - \hat{\mathbf{T}}_i$,
defined up to an orthogonal transformation of the tangent frame, with rate $(N^{-1}\log N)^{\ell/d}$ for appropriate assumptions (see Proposition~\ref{proposition: GMLS error} below.)
\end{remark}

The updated GMLS normal directions at $\mathbf{y}_i$ are obtained as a basis for the orthogonal complement of the tangent space $\hat{\mathbf{N}}_i =  \text{null}(\hat{\mathbf{T}}_i^\top).$
The GMLS frame $[\hat{\mathbf{T}}_i \; \hat{\mathbf{N}}_i]$ is a more accurate representation of the initial SVD approximation for $\ell \geq2$.
Indeed, iterating the GMLS algorithm on the frame $[\hat{\mathbf{T}}_i \; \hat{\mathbf{N}}_i]$ produces successively more accurate approximations of the true tangent-normal frame at $\mathbf{y}_i$.
Iteration is terminated when $\|D\tilde \pi_i(0)\|_2 \leq \epsilon$, where $\epsilon$ is some small tolerance, indicating that the deviation in the normal direction of the local approximation of the tangent plane is negligible.
Appendix \ref{Appendix: GMLS alg} contains further implementation details for the GMLS algorithm.
 
\subsection{Approximation of prolongated data points}\label{subsection: prolongated data points}

The previous subsections described an approach to approximate the tangent vectors at scattered data points $\mathbf{y}_i$.
Here, we consider the case where this data decomposes into ambient and prolongated coordinates: $\mathbf{y}_i = \mathbf{z}_i^{(k)} = (\mathbf{x}_i,\mathbf{u}_i,\mathbf{u}_i^{(k)})$.

    In general, the solution $u(x)$ of a differential equation may depend on the independent variables $x \in \mathbb{R}^n$ and on $r \ge 0$ free constants $C \in \mathbb{R}^r$. 
    In the case of ODEs, the constants $C$ arise as integration constants.
    In the case of PDEs, we assume that one can parameterize the initial conditions with a finite number of parameters.
    For example, recall the linear advection equation from Remark \ref{Remark: inf dim symmetry}.
    Here, the initial condition can be any smooth function $f$.
    If the initial condition is restricted to a Gaussian family, $f(y) = \exp(-(y-C_1)^2/(2C_2))$, then $C=(C_1,C_2)$ parameterizes the center and width of the Gaussian, respectively.
    
    Recall that the intrinsic dimension of the solution manifold is $d = \text{dim}\,\mathscr{S}^{(k)}$ at each prolongation level $k$, with $d$ independent of $k$.
    However, $d$ does depend on $r$, i.e., the intrinsic dimension of the manifold changes if one considers a particular solution with fixed integration constants or a family of solutions parameterized by one or more integration constants (Example \ref{ex: particular vs family} below highlights this distinction).

    \begin{itemize}
        \item If all integration constants are fixed, the solution is uniquely determined by $x$, and the intrinsic dimension is $d=n$. 
        In this case, the tangent vectors of the solution manifold can be computed directly with respect to the independent variables $x$.
        \item If the integration constants are not fixed, the solution depends on additional degrees of freedom, and the intrinsic dimension satisfies $d = n+r > n$. 
        In this scenario, the tangent vectors must account for variations along both $x$ and the free constants $C$.
    \end{itemize}

    In our numerical framework, we unify these cases by introducing an \textit{augmented set of independent variables}.
    By abuse of notation we denote 
        $x \equiv (x, C) \in \mathbb{R}^d$,
    which subsumes both the true independent variables and any effective degrees of freedom from free constants.  
    This notation highlights the fact that if only data is available, free integration constants are effectively independent variables and should be treated as such in our numerical approach.
    Therefore, without loss of generality we assume that $n=d$ throughout the rest of the manuscript.

Let $\mathbf{T}_{k,i}^j$ denote the collection of tangent vectors at the true $k$-prolongated data $\mathbf{z}_i^{(k)}$, with local coordinates $\bm{\varsigma}^{(i)} = (\varsigma_{1}^{(i)},\dots,\varsigma_{d}^{(i)})\in \mathbb{R}^{d}$.
Since the intrinsic dimension $d$ is independent of $k$, the same coordinates $\bm{\varsigma}^{(i)}$ can be used for all prolongations. 
Different coordinates $\bm{\varsigma}_k^{(i)}$ could be chosen for each $k$, but for simplicity we adopt a single coordinate system throughout.

Consider the tangent vectors of the ambient space at point $\mathbf{z}_i^{(0)} = (\mathbf{x}_i, \mathbf{u}_i)$, where $\mathbf{x}_i = ((x_i)_1,\dots,(x_i)_d) \in \mathbb{R}^d$ (here, we emphasize that $x$ refers to the augmented set of independent variables and integration constants) and $\mathbf{u}_i = ((u_i)_1,\dots,(u_i)_m) \in \mathbb{R}^{m}$:
\begin{equation}
\mathbf{T}_{0,i}^j
= 
\begin{bmatrix}
\frac{\partial (x_i)_1}{\partial \varsigma_{j}^{(i)}} & \frac{\partial (x_i)_2}{\partial \varsigma_{j}^{(i)}} & \cdots & \frac{\partial (x_i)_d}{\partial \varsigma_{j}^{(i)}} &
\frac{\partial (u_i)_1}{\partial \varsigma_{j}^{(i)}} & \cdots & \frac{\partial (u_i)_{m}}{\partial \varsigma_{j}^{(i)}}
\end{bmatrix} \in \mathbb{R}^{d+m},
\label{eq:tanvector}
\end{equation}
 with $j=1,\dots,d$, where all derivatives are evaluated at $\bm{\varsigma}^{(i)} = \mathbf{0}$.
Then, by the chain rule, the partial derivatives $\partial (u_i)_b / \partial (x_i)_j$ for $b=1,2,\dots,m$ and $j=1,2,\dots,d$ satisfy the linear system
\begin{equation}\label{eq: tangent linear system}
\begin{bmatrix}
\frac{\partial (u_i)_b}{\partial \varsigma_1^{(i)}} \\
\vdots \\
\frac{\partial (u_i)_b}{\partial \varsigma_d^{(i)}}
\end{bmatrix}
=
\begin{bmatrix}
\frac{\partial (x_i)_1}{\partial \varsigma_1^{(i)}} & \dots & \frac{\partial (x_i)_d}{\partial \varsigma_1^{(i)}} \\
\vdots & & \vdots \\
\frac{\partial (x_i)_1}{\partial \varsigma_d^{(i)}} & \dots & \frac{\partial (x_i)_d}{\partial \varsigma_d^{(i)}}
\end{bmatrix}
\begin{bmatrix}
\frac{\partial (u_i)_b}{\partial (x_i)_1} \\
\vdots \\
\frac{\partial (u_i)_b}{\partial (x_i)_d}
\end{bmatrix},
\end{equation}
which is understood to be evaluated at $\bm{\varsigma}^{(i)} = \mathbf{0}$.
We write \eqref{eq: tangent linear system} as $\mathscr{B}_i=\mathscr{A}_i \mathscr{X}_i $, noting that $\mathscr{A}_i$ and $\mathscr{B}_i$ depend only on the known entries of $\mathbf{T}_{0,i}^j$.
    Without loss of generality, assume that $(x_i)_k:\mathbb{R}^d \to \mathbb{R}$ is a regular parameterization defined as $({x}_i)_k(\mathbf{0}) = (\mathbf{x}_i)_k$ for $k=1,\ldots, d$, which ensures that $\mathscr{A}^{-1}$ exists and allows us to write $\mathscr{X}_i = \mathscr{A}_i^{-1}\mathscr{B}_i$.
    This procedure can be repeated for each $b=1,\dots,m$ to recover the prolongated coordinates $\mathbf{z}_i^{(1)} = \{(\mathbf{x}_i,\mathbf{u}_i,\mathbf{u}_i^{(1)})\}$.

In practice, the joint SVD–GMLS approach provides approximations $\hat{\mathbf{T}}_{k,i}^j$ given approximated jet space data $\tilde{\mathbf{z}}_i^{(k-1)}$.
Exactly as above, one construct a linear system $\tilde{\mathscr{B}}_i=\tilde{\mathscr{A}}_i \tilde{\mathscr{X}}_i $ whose solution $\tilde{\mathscr{X}}_i$ may be used to approximate the $k$th jet data $\tilde{\mathbf{z}}_i^{(k)}$.

This idea can be applied recursively to prolongate the given data in the ambient space.
Suppose that the sampled data $\mathbf{z}_i^{(0)}=\{(\mathbf{x}_i,\mathbf{u}_i)\}_{i=1}^N \subset M$ is given.
An initial application of the SVD–GMLS method, as outlined above, yields approximations of the tangent space at each point.
By solving a hierarchy of linear systems, one may compute $\tilde{\mathbf{u}}_i^{(1)}$, an approximation of the first derivatives of $\mathbf{u}_i$, and arrive at the augmented dataset
\begin{equation}
    \tilde{\mathbf{z}}_i^{(1)}=\{(\mathbf{x}_i,\mathbf{u}_i,\tilde{\mathbf{u}}_i^{(1)}) \}_{i=1}^N \approx \mathbf{z}_i^{(1)}\in \mathscr{S}^{(1)}.\notag
\end{equation}
By iterating this procedure on the computed data $\tilde{\mathbf{z}}_i^{(k-1)}$, one obtains approximations $\tilde{\mathbf{z}}_i^{(k)}$ and so on.
After $p$ iterations, one arrives at prolongated data
\begin{equation}
   \tilde{\mathbf{z}}_i^{(p)} =  \{(\mathbf{x}_i,\mathbf{u}_i,\tilde{\mathbf{u}}_i^{(p)}) \}_{i=1}^N\approx \mathbf{z}_i^{(p)}\in \mathscr{S}^{(p)}.\notag
\end{equation}
Once the $p$th-order prolongated data is available, one may compute $\mathbf{\tilde{L}}_i$, an approximation of $L^{(p)}\Psi$ at the $i$th sample point.
A final application of SVD–GMLS to the prolongated dataset $\tilde{\mathbf{z}}_i^{(p)}$ provides tangent and normal vectors in jet space.
These normal vectors are precisely the $\tilde{\mathbf{S}}_{i,\nu}$, which approximate $\nabla^{(p)}[\Delta_\nu]$ at the corresponding sample points.
The last step is to assemble the matrix $\tilde{\mathbf{P}}$ according to \eqref{eq: stacked matrix} and compute its numerical nullspace, which provides an approximation of the Lie algebra of symmetries directly from sampled data.

We conclude this section by summarizing the entire numerical procedure in Algorithm \ref{alg:summary}.
The computational complexity of algorithm \ref{alg:summary} is at best $\mathcal{O}(p D_p N \log N)$ and at worst $\mathcal{O}(p D_p N^2)$, depending on the nearest neighbor search algorithm and the size of the data set.
Further details are provided in Appendix \ref{Appendix: complexity}.

\begin{algorithm}
\caption{Summary of data-driven method}\label{alg:summary}

\begin{algorithmic}[1]
\Require \parbox[t]{\dimexpr.9\linewidth-\algorithmicindent}{%
A set of (distinct) data points $\{(\mathbf{x}_i,\mathbf{u}_i)\}_{i=1}^N$ on a $d$-dimensional manifold $\mathscr{S}^{(0)}$, ansatz matrix $\Psi$ (see \eqref{eq: Psi}), nearest neighbor parameter $\mathfrak K \ll N$, stopping threshold $\epsilon=10^{-12}$, polynomial degree $\ell$.}

\For{$k\in{0,\cdots,p-1}$}
\For{$i\in\{1,\cdots,N\}$}
\State For data $\tilde{\mathbf{z}}_i^{(k)}$, compute initial tangent–normal frames $[\tilde{\mathbf{T}}_i, \tilde{\mathbf{N}}_i]$ via the SVD.
\State Refine these frames $[\hat{\mathbf{T}}_i, \hat{\mathbf{N}}_i]$ using GMLS (see Appendix \ref{Appendix: GMLS alg}).
\State Prolongate with GMLS to obtain approximations $\tilde{\mathbf{z}}_i^{(k+1)}=\{(\mathbf{x}_i,\mathbf{u}_i,\tilde{\mathbf{u}}_i^{(k+1)})\}$.
\EndFor
\EndFor

\State Compute $\tilde{\mathbf{L}}_i$ (see \eqref{eq: L}) for each $i=1,\dots,N$.
\State Repeat steps 3 and 4 on $\tilde{\mathbf{z}}_i^{(p)}$ to compute tangents, and use these to obtain $\tilde{\mathbf{S}}_i$ (see \eqref{eq: S}), for each $i=1,\dots,N$.

\State Assemble the stacked matrix $\tilde{\mathbf{P}}$ according to \eqref{eq: stacked matrix} using $\tilde{\mathbf{L}}_i$, $\tilde{\mathbf{S}}_i$, and $\Psi$.
\State Compute the numerical nullspace of $\tilde{\mathbf{P}}$ via the SVD to approximate the infinitesimal generators.

\Ensure\parbox[t]{\dimexpr.9\linewidth-\algorithmicindent}{Numerical nullspace of $\tilde{\mathbf{P}}$, approximating the infinitesimal generators.}
\end{algorithmic}
\end{algorithm}

\begin{figure}
    \centering
    \includegraphics[width=\linewidth]{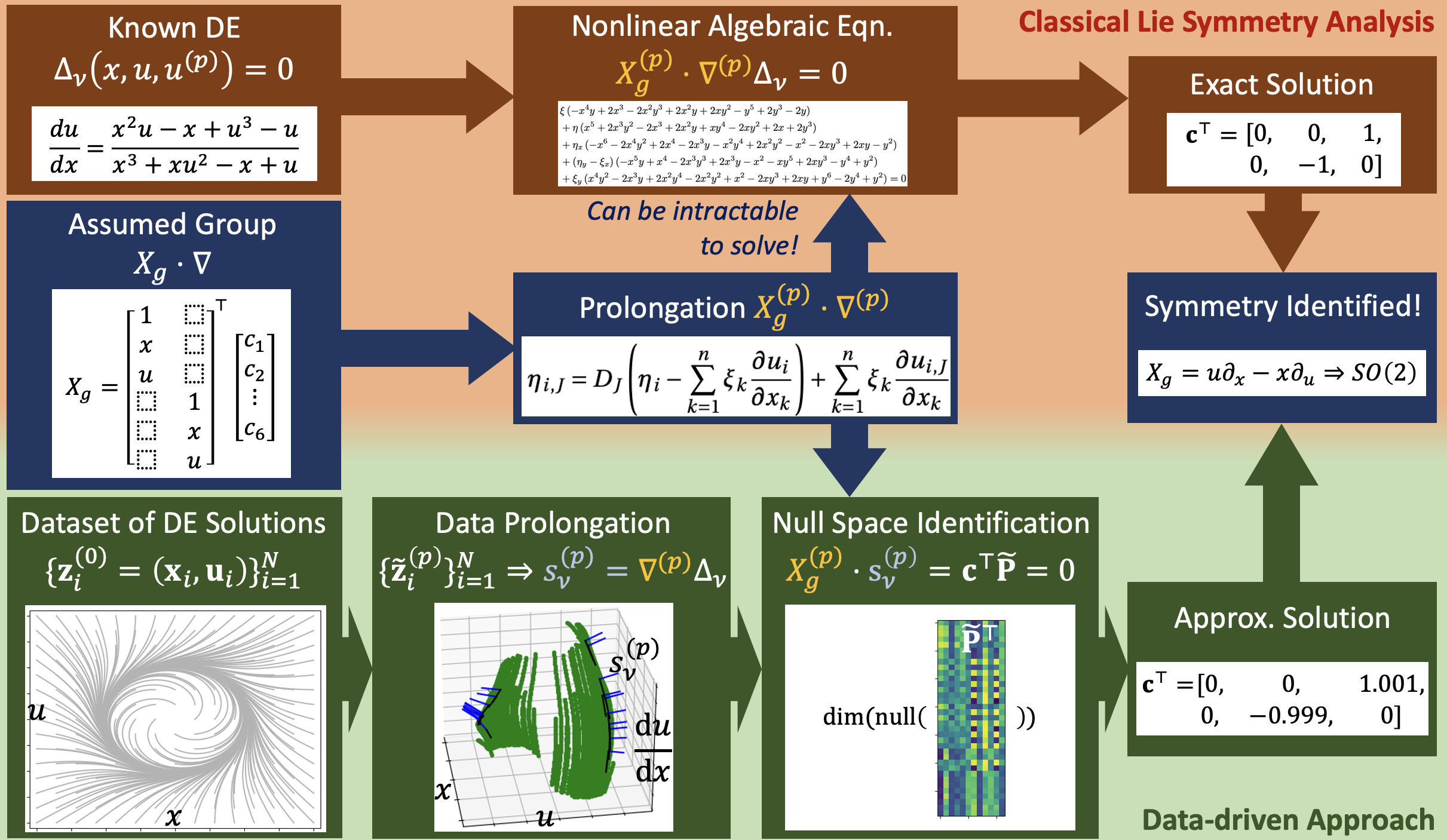}
    \caption{Comparison of analytical and data-driven procedures of symmetry analysis.}
    \label{fig:sym}
\end{figure}

\section{Error analysis}\label{sec4}

In this section, we provide a detailed error analysis of our numerical procedure.  
To fix notation, denote $\mathcal{P}[X_g] \equiv L^{(p)}X_g \cdot \nabla^{(p)}[\Delta_\nu]$. 
We view $\mathcal{P}:TM^{(0)} \to \mathcal{C}^{\ell+1}(M^{(p)},\mathbb{R}^q)$ as an operator that maps vector fields $X_g:M^{(0)}\to TM^{(0)}$ to a $q$-tuple of smooth functions on the $p$th jet space, defined by 
\begin{equation}
    \mathcal{P}[X_g](z^{(p)}) = 
\left( (L^{(p)}X_g)(z^{(p)}) \cdot \nabla^{(p)}[\Delta_j](z^{(p)}) \right)_{j=1,\ldots, q}
\end{equation}
where $z^{(p)} = (x,u,u^{(p)}) \in \mathscr{S}^{(p)} \subset M^{(p)}$ are coordinates in jet space.

The goal of our algorithm is to approximate the nullspace of $\mathcal{P}$,
\begin{equation}
    \text{null } \mathcal{P} = \Big\{ X_g : L^{(p)}X_g \cdot \nabla^{(p)}[\Delta_\nu] = \bm{0} \Big\}, \quad \nu = 1,2,\dots,q.\notag
\end{equation}
Conceptually, two distinct sources of error arise in our approach.  
First, we approximate $\mathcal{P}$ by the object $P$ according to \eqref{eq: matrix P}.
We will find it useful to view $P: (TM^{(0)})_{\text{span}\{\Psi\}} \to \mathcal{C}^{\ell+1}(M^{(p)},\mathbb{R}^{K\times q})$ as an operator which maps a finite-dimensional subspace of vector fields spanned by $\Psi$ (recall the ansatz \eqref{eq: ansatz}) to a $K\times q$ matrix-valued function, defined by
\begin{equation}
    P(z^{(p)}) = \begin{bmatrix}
        (L^{(p)}\Psi(z^{(p)}))^T (\nabla^{(p)}[\Delta_1](z^{(p)})) & \dots & (L^{(p)}\Psi(z^{(p)}))^T (\nabla^{(p)}[\Delta_q](z^{(p)}))
    \end{bmatrix}.\notag
\end{equation}
If the chosen basis is not sufficiently expressive, some or all symmetries of the underlying dynamical system may not be captured by $P$.  
Second, $P$ is not directly available since the governing DE is unknown; instead, our data-driven method produces an approximation $\tilde{\mathbf{P}}$ of $P(z^{(p)})$ for all $z^{(p)}\in\mathscr{S}^{(p)}$, which introduces numerical error.

Ideally, one would compare the nullspaces of $\mathcal{P}$ and $\tilde{\mathbf{P}}$ directly.  
However, such a comparison is problematic in general: for instance, the heat equation $u_t = u_{xx}$ possesses a symmetry group spanned by six vector fields together with an infinite-dimensional subalgebra (see \S 2.4 in \cite{olver1993applications} for a full treatment of the heat equation, and \S\ref{subsection: ex heat equation} for a numerical approximation of a subset of these symmetries).  
Thus, the nullspace of $\mathcal{P}$ cannot, in general, be represented by a finite-dimensional matrix. 
In this work, we focus on the case where the chosen finite-dimensional basis ($\psi$ in \eqref{eq: ansatz}) is sufficiently rich to capture one or more symmetries of interest.  
Under this assumption, the nullspace of $P(z^{(p)})$ encodes one or more (though not necessarily all) symmetries of the underlying system.  
Our goal in the remainder of this section is to study the error inherent in $\tilde{\mathbf{P}}$ to assess the fidelity with which our algorithm recovers these symmetries.
We conclude the section with a brief remark about possible bias error introduced by the choice of ansatz. 

Assume that initial data $\mathbf{z}_i^{(0)}$ is given and that approximate data $\tilde{\mathbf{z}}_i^{(p)}$ has been computed.
We first relate $P(\mathbf{z}_i^{(p)})$ to its numerical approximation $\tilde{\mathbf{P}}$ with an additive error term $\mathbf{E}$.
Notice that the dimensions of the matrix $\tilde{\mathbf{P}}$ and $P(\mathbf{z}_i^{(0)})$ differ because $\tilde{\mathbf{P}}$ concatenates many copies of $P(\mathbf{z}_i^{(0)})$ at each of the $N$ approximated data points.
For that reason, we define 
\begin{equation}
    \mathbf{P} = \begin{bmatrix}
        P(\mathbf{z}_1^{(0)}) & \dots &  P(\mathbf{z}_N^{(0)})
    \end{bmatrix} \in \mathbb{R}^{K\times N(D_p-d)}\notag
\end{equation}
as the matrix consisting of $N$ copies of the object $P$ evaluated at each data point stacked together so that $\mathbf{P}$ and $\tilde{\mathbf{P}}$ have the same dimension.

The data-driven matrix $\tilde{\mathbf{P}}$ is subject to numerical errors, and therefore does not have an exact nullspace—only an approximate one associated with its smallest singular values. 
To assess the fidelity of this numerical approximation, we establish an error bound on the angle between the subspaces spanned by the right singular vectors of $\mathbf{P}$ and $\tilde{\mathbf{P}}$.  
This result is stated in Theorem \ref{theorem: final error analysis} below.
Since the proof is somewhat involved, we first review several relevant auxiliary results and derive several new intermediate results (Lemmas~\ref{proposition: jet data accuracy}-\ref{proposition: P error}) required for the full proof.

We begin by recalling a sine theorem due to Davis and Kahan \cite{davis1970rotation}, which provides a quantitative comparison between the subspaces spanned by singular vectors of a matrix $\mathbf{A}$ and its perturbed counterpart $\mathbf{A} + \bm\Upsilon$, where $\bm\Upsilon$ is a small error term. 
The result relates the principal angles between the corresponding invariant subspaces to the norm of the perturbation.

\begin{theorem}[Davis and Kahan sine theorem, \cite{davis1970rotation}]\label{theorem: sine theorem}
    Suppose that 
    \begin{equation}
        \begin{split}
            \mathbf{A} &= \mathbf{H}_0 \mathbf{A}_0 \mathbf{H}_0^* + \mathbf{H}_1 \mathbf{A}_1 \mathbf{H}_1^* ,\\
\tilde{\mathbf{A}} = \mathbf{A} + \bm{\Upsilon}  &= \mathbf{F}_0 \bm{\Lambda}_0 \mathbf{F}_0^* + \mathbf{F}_1 \bm{\Lambda}_1 \mathbf{F}_1^* ,
        \end{split}\label{eq:def A}
    \end{equation}
    are Hermitian and that $[\mathbf{H}_0 \; \mathbf{H}_1]$ and $[\mathbf{F}_0 \; \mathbf{F}_1]$ are orthogonal sets.
    Define the residual
  $\mathbf{R} = \tilde{\mathbf{A}} \mathbf{H}_0 - \mathbf{H}_0 \mathbf{A}_0$.
    If there exists an interval $[a,b]$ and $\delta > 0$ such that the spectrum of $\mathbf{A}_0$ lies entirely in $[a,b]$ while that of $\bm{\Lambda}_1$ lies entirely outside of $(a-\delta,b+\delta)$, then $\|\sin\bm\Theta\|_2 \leq \delta^{-1}\|\mathbf{R}\|_2$, where $\bm\Theta$ is the diagonal matrix of principal angles between the subspaces spanned by $\mathbf{H}_0$ and $\mathbf{F}_0$.
\end{theorem}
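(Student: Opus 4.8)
\textit{Proof proposal.} The plan is to reduce the assertion to a norm bound for the solution of a Sylvester equation, then to exploit the Hermitian structure to control that solution by the spectral gap $\delta$. The first step is to express $\|\sin\bm\Theta\|_2$ through the ``cross block'' $\mathbf{F}_1^*\mathbf{H}_0$. Since $[\mathbf{F}_0\ \mathbf{F}_1]$ is orthogonal, the identity $\mathbf{H}_0^*\mathbf{F}_0\mathbf{F}_0^*\mathbf{H}_0 + \mathbf{H}_0^*\mathbf{F}_1\mathbf{F}_1^*\mathbf{H}_0 = \mathbf{H}_0^*\mathbf{H}_0 = \mathbf{I}$ shows that the two positive semidefinite matrices on the left are simultaneously diagonalizable with eigenvalues summing to one; the singular values of $\mathbf{F}_0^*\mathbf{H}_0$ are the cosines of the principal angles between $\mathcal{R}(\mathbf{H}_0)$ and $\mathcal{R}(\mathbf{F}_0)$, and therefore those of $\mathbf{F}_1^*\mathbf{H}_0$ are the sines. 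Hence $\|\sin\bm\Theta\|_2 = \|\mathbf{F}_1^*\mathbf{H}_0\|_2$, and it suffices to bound $\mathbf{X} := \mathbf{F}_1^*\mathbf{H}_0$.

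Next I would left-multiply the residual identity $\mathbf{R} = \tilde{\mathbf{A}}\mathbf{H}_0 - \mathbf{H}_0\mathbf{A}_0$ by $\mathbf{F}_1^*$. Using the spectral decomposition $\tilde{\mathbf{A}} = \mathbf{F}_0\bm{\Lambda}_0\mathbf{F}_0^* + \mathbf{F}_1\bm{\Lambda}_1\mathbf{F}_1^*$ and the orthogonality relations $\mathbf{F}_1^*\mathbf{F}_0 = \mathbf{0}$, $\mathbf{F}_1^*\mathbf{F}_1 = \mathbf{I}$, one gets $\mathbf{F}_1^*\tilde{\mathbf{A}} = \bm{\Lambda}_1\mathbf{F}_1^*$, so that $\bm{\Lambda}_1\mathbf{X} - \mathbf{X}\mathbf{A}_0 = \mathbf{F}_1^*\mathbf{R}$. (As an aside, since $\mathbf{A}\mathbf{H}_0 = \mathbf{H}_0\mathbf{A}_0$ by the decomposition of $\mathbf{A}$, one also has $\mathbf{R} = \bm{\Upsilon}\mathbf{H}_0$, which is the form in which the theorem is applied.) The spectral hypothesis gives the separation bound $\operatorname{sep}(\bm{\Lambda}_1,\mathbf{A}_0) := \min_{\lambda\in\sigma(\bm{\Lambda}_1),\,\mu\in\sigma(\mathbf{A}_0)}|\lambda-\mu| \ge \delta$, because every $\mu\in\sigma(\mathbf{A}_0)$ lies in $[a,b]$ while every $\lambda\in\sigma(\bm{\Lambda}_1)$ satisfies $\lambda\le a-\delta$ or $\lambda\ge b+\delta$.

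The final step is the Sylvester bound $\|\mathbf{X}\|_2 \le \operatorname{sep}(\bm{\Lambda}_1,\mathbf{A}_0)^{-1}\|\mathbf{F}_1^*\mathbf{R}\|_2 \le \delta^{-1}\|\mathbf{R}\|_2$, which combined with the first step yields $\|\sin\bm\Theta\|_2 \le \delta^{-1}\|\mathbf{R}\|_2$. This is the main obstacle, because diagonalizing $\bm{\Lambda}_1$ and $\mathbf{A}_0$ unitarily reduces $\bm{\Lambda}_1\mathbf{X}-\mathbf{X}\mathbf{A}_0 = \mathbf{F}_1^*\mathbf{R}$ to scalar equations $(\lambda_i-\mu_j)y_{ij} = c_{ij}$, i.e. to a Schur (Hadamard) product $\mathbf{Y} = \mathbf{M}\circ\mathbf{C}$ with $|\mathbf{M}_{ij}|\le\delta^{-1}$, and the naive estimate $\|\mathbf{M}\circ\mathbf{C}\|_2\le(\max_{ij}|\mathbf{M}_{ij}|)\|\mathbf{C}\|_2$ is \emph{false} for the spectral norm. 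The clean remedy, which is the heart of the Davis--Kahan argument, is to use the resolvent representation $\mathbf{X} = \tfrac{1}{2\pi i}\oint_\Gamma (\bm{\Lambda}_1 - z)^{-1}(\mathbf{F}_1^*\mathbf{R})(z-\mathbf{A}_0)^{-1}\,dz$ with $\Gamma$ a contour enclosing $\sigma(\mathbf{A}_0)$ but not $\sigma(\bm{\Lambda}_1)$ (one checks by the resolvent identities $\bm{\Lambda}_1(\bm{\Lambda}_1-z)^{-1} = \mathbf{I} + z(\bm{\Lambda}_1-z)^{-1}$ and $(z-\mathbf{A}_0)^{-1}\mathbf{A}_0 = -\mathbf{I} + z(z-\mathbf{A}_0)^{-1}$ that this indeed solves the Sylvester equation); estimating the integral over a rectangular contour hugging $[a,b]$ via $\|(\bm{\Lambda}_1-z)^{-1}\|_2 = \operatorname{dist}(z,\sigma(\bm{\Lambda}_1))^{-1}$ and a limiting argument delivers the constant $\delta^{-1}$ for every unitarily invariant norm, in particular $\|\cdot\|_2$. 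Since the theorem is quoted from \cite{davis1970rotation}, in the paper one may alternatively invoke this Sylvester estimate directly and omit the contour computation.
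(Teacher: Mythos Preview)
The paper does not prove this theorem at all: it is quoted verbatim as a classical result from \cite{davis1970rotation} and then invoked as a black box in the proof of Theorem~\ref{theorem: final error analysis}. Your proposal, by contrast, supplies an actual argument, and it is the standard one: identify $\|\sin\bm\Theta\|_2$ with $\|\mathbf{F}_1^*\mathbf{H}_0\|_2$, derive the Sylvester equation $\bm{\Lambda}_1\mathbf{X}-\mathbf{X}\mathbf{A}_0=\mathbf{F}_1^*\mathbf{R}$, and bound its solution via the spectral separation. You are also right to flag that the naive Hadamard-product estimate fails for the operator norm and that the interval-type separation hypothesis is precisely what makes the contour-integral (or Cauchy/L\"owner) argument yield the sharp constant $\delta^{-1}$ for every unitarily invariant norm. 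So there is nothing to compare at the level of proofs: the paper simply cites the result, while you have correctly sketched the Davis--Kahan proof and even anticipated, in your final sentence, that citing \cite{davis1970rotation} is all the paper actually does.
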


In our context, we define $\mathbf{A}=\mathbf{P} \mathbf{P}^\top$ and $\tilde{\mathbf{A}}=\tilde{\mathbf{P}} \tilde{\mathbf{P}}^\top$.
Note that both $\mathbf{A}$ and $\tilde{\mathbf{A}}$ are Hermitian, and that $\text{null}({\mathbf{P}})=\text{null}({\mathbf{A}})$ and  $\text{null}({\tilde{\mathbf{P}}})=\text{null}({\tilde{\mathbf{A}}})$.
Our strategy is to apply the results of Theorem \ref{theorem: sine theorem} to $\mathbf{A}$ and $\tilde{\mathbf{A}}$.
To do so, our goal is to express $\tilde{\mathbf{A}} = \mathbf{A} + \bm\Upsilon$, and derive a bound on $\|\bm\Upsilon\|_2$.
To that end, we recall a useful error estimate for the GMLS algorithm \cite{jiang2024generalized}.

\begin{proposition}\label{proposition: GMLS error}
Let $\mathbf{z}_i^{(0)}$ be a set of $N$ uniformly sampled i.i.d.~data from a $d$-dimensional manifold $\mathscr{S}$. 
Assume that $g \in C^{\ell+1}(\mathscr{S})$.
Let $\hat g$ be the GMLS approximation to $g$. 
Then, w.p.h. (with probability higher) than $1-\frac{1}{N}$,
\begin{equation}
   \left|D g(\mathbf{z}_i^{(0)}) - D\hat{g}(\mathbf{z}_i^{(0)}) \right|  = \mathcal{O}(\epsilon_d), \quad \epsilon_d =  \left( \left(\frac{\log N}{N} \right)^{\frac{\ell}{d}}\right),\notag
\end{equation}
as $N\to \infty$ for all $\mathbf{z}_i^{(0)}$. 
The constant in the big-$\mathcal{O}$ notation can depend on $d$, but is independent of $N$. 
\end{proposition}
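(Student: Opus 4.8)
The estimate is quoted from \cite{jiang2024generalized}, so I will only outline the argument. The plan is to localize the analysis to a fixed sample point $\mathbf{z}_i^{(0)}$, exploit that the GMLS reconstruction $\hat g$ is \emph{by construction} the least-squares best fit of $g$ — sampled at the $\mathfrak{K}$ nearest neighbors of $\mathbf{z}_i^{(0)}$ — by a polynomial of degree $\ell$ in intrinsic tangent coordinates, and split the error $Dg-D\hat g$ into a \emph{consistency} part measuring how well any degree-$\ell$ polynomial can match $g$ on the local neighborhood, and a \emph{stability} part measuring the sensitivity of the least-squares solution to that mismatch. First I would fix the local parameterization $\iota$ of $\mathscr{S}$ near $\mathbf{z}_i^{(0)}$ as in Remark~\ref{remark: embedding} and introduce the degree-$\ell$ Taylor polynomial $\pi^{*}$ of $g\circ\iota$ at the base point. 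Since $g\in C^{\ell+1}(\mathscr{S})$ and $\iota$ is at least $C^{\ell+1}$, Taylor's theorem with remainder gives $\|g\circ\iota-\pi^{*}\|_{L^\infty(B_{h_i})}=\mathcal{O}(h_i^{\ell+1})$ and, crucially, $\|D(g\circ\iota)-D\pi^{*}\|_{L^\infty(B_{h_i})}=\mathcal{O}(h_i^{\ell})$, where $h_i$ denotes the radius of the $\mathfrak{K}$-nearest-neighbor ball at $\mathbf{z}_i^{(0)}$ and the derivative estimate loses one power of $h_i$ relative to the function-value estimate.

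The second step is to control the sampling geometry. Because the $\mathbf{z}_i^{(0)}$ are i.i.d.\ uniform on a compact $d$-dimensional manifold, a covering argument — partition $\mathscr{S}$ into $\mathcal{O}(N/\log N)$ cells of comparable volume, bound the Binomial occupancy of each cell by a Chernoff inequality, and union-bound over cells — shows that, with probability at least $1-\frac{1}{N}$ after tuning the constant in the cell size, every geodesic ball of radius $\asymp(\log N/N)^{1/d}$ contains between $\mathfrak{K}$ and $\mathcal{O}(\mathfrak{K})$ sample points \emph{simultaneously for all} $i$; on this event $h_i=\mathcal{O}((\log N/N)^{1/d})$ for every $i$. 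The same event should also yield that, after rescaling the neighborhood to unit size, the $\mathfrak{K}$ local coordinates $\bm{\tau}_w$ form a \emph{norming set} for the polynomial space $\mathbb{P}^{\ell,d}$ with a norming constant bounded independently of $N$ and $i$ — equivalently, the least-squares design matrix $[\phi_j(\bm{\tau}_w)]$ is uniformly well-conditioned. The approximate frame enters here as well: the SVD tangent frame is only a first-order guess, but for $\ell\ge 2$ the GMLS iteration refines it so that the working coordinates are within $\mathcal{O}(h_i^{\ell})$ of the true intrinsic ones, and the curvature of $\mathscr{S}$ perturbs the design matrix only at order $\mathcal{O}(h_i^{2})$; both effects are absorbed into the constants.

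With these ingredients in hand the conclusion is routine. Writing $\hat\pi$ for the computed least-squares polynomial, $\hat\pi-\pi^{*}$ is the least-squares fit of the residual $g\circ\iota-\pi^{*}$; the norming-set bound controls $\|\hat\pi-\pi^{*}\|_{L^\infty(B_{h_i})}$ by a constant multiple of the sampled residual, which is $\mathcal{O}(h_i^{\ell+1})$, and a Markov--Bernstein inequality for polynomials on a ball of radius $h_i$ then bounds $\|D\hat\pi-D\pi^{*}\|_{L^\infty(B_{h_i})}$ by $\mathcal{O}(h_i^{-1}\cdot h_i^{\ell+1})=\mathcal{O}(h_i^{\ell})$. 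Combining with the consistency estimate of step one, the triangle inequality yields $|Dg(\mathbf{z}_i^{(0)})-D\hat g(\mathbf{z}_i^{(0)})|=\mathcal{O}(h_i^{\ell})=\mathcal{O}((\log N/N)^{\ell/d})$ for all $i$ on the good event, whose probability is at least $1-\frac{1}{N}$. I expect the main obstacle to be exactly the uniform-in-$i$ conditioning estimate of the second step: turning the randomness of the i.i.d.\ sample into a deterministic, $N$-independent bound on the least-squares conditioning across all $\mathcal{O}(N)$ shrinking neighborhoods at once, and simultaneously tracking the $\mathcal{O}(h_i^{2})$ curvature and $\mathcal{O}(h_i^{\ell})$ frame-refinement corrections. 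Everything downstream — the $h_i^{-1}$ loss under differentiation and the propagation of the $\mathcal{O}(h_i^{\ell+1})$ residual — is standard polynomial approximation theory.
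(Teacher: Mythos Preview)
The paper does not prove this proposition at all: it is stated without proof as a recalled result from \cite{jiang2024generalized} (see the sentence immediately preceding the proposition, ``we recall a useful error estimate for the GMLS algorithm''). You correctly identify this at the outset and then supply an outline of the argument that the reference contains; your sketch --- Taylor consistency $\mathcal{O}(h_i^{\ell+1})$, uniform $\mathfrak{K}$-NN radius $h_i\asymp(\log N/N)^{1/d}$ via a covering/Chernoff/union-bound, norming-set stability of the local least-squares problem, and a Markov--Bernstein loss of one power of $h_i$ under differentiation --- is the standard route and matches the analysis in the GMLS literature. In that sense your proposal goes strictly beyond what the present paper does, and the outline is sound; the caveat you flag about uniform-in-$i$ conditioning of the rescaled design matrix is indeed the technical heart of the cited proof.
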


We now utilize Proposition \ref{proposition: GMLS error} to derive an error bound on the accuracy of the prolongated data in jet space from the GMLS procedure, which will later be used to provide an error bound on $\|\bm\Upsilon\|_2$.
To do so, we must consider linear systems of the form \eqref{eq: tangent linear system}.
We will assume the following regularity conditions:

\begin{assumption}\label{assumption}
    Without loss of generality, we take the same local coordinates $\bm{\varsigma}^{(i)}$ for each prolongation level and assume that these parameterizations are regular, i.e., that $(x_i)_j:\mathbb{R}^d \to \mathbb{R}$ is a regular parameterization defined as $({x}_i)_j(\varsigma) = (\mathbf{x}_i)_j$ for $j=1,\ldots, d$.
    This condition ensures that the matrix $\mathscr{A}_i$ is invertible for each $i=1,\dots,N$.
    We further assume that there exist constants $0<a\leq b<\infty$, independent of $d$ and $N$, such that the singular values of $\mathscr{A}_i$ satisfy
    \begin{equation}
        a \leq \sigma_j(\mathscr{A}_i) < b, \quad j=1,2,\dots,d, \notag
    \end{equation}
    for each $i=1,2,\dots,N$, such that         $\|\mathscr{A}_i^{-1}\|_2  = \mathcal{O}(1)$.
\end{assumption}

\begin{lemma}\label{proposition: jet data accuracy}
    Let $\mathbf{z}_i^{(0)}=(\mathbf{x}_i,\mathbf{u}_i)$ be a given set of $N$ i.i.d.~samples drawn uniformly from a $d$-dimensional manifold $\mathscr{S}^{(0)}$, where $\mathbf{x}_i = ((x_i)_1,\dots,(x_i)_d) \in \mathbb{R}^d$ and $\mathbf{u}_i = ((u_i)_1,\dots,(u_i)_m) \in \mathbb{R}^{m}$. Let $\mathscr{S}^{(p)}$ denote the $p$th prolongated solution manifold corresponding to $\mathscr{S}^{(0)}$ be $\mathcal{C}^{\ell+1}$, with $\ell \geq 2$ 
    , and denote $\mathbf{z}_i^{(p)} = (\mathbf{x}_i,\mathbf{u}_i,\mathbf{u}_i^{(p)})\in \mathscr{S}^{(p)}$ as the true $p$th order jet space data associated with $\mathbf{z}_i^{(0)}$.  
    Let $\tilde{\mathbf{z}}_i^{(k)} = (\mathbf{x}_i,\mathbf{u}_i,\tilde{\mathbf{u}}_i^{(k)})$ be the GMLS approximation to $\mathbf{z}_i^{(k)}$ according to the procedure in \S\ref{subsection: prolongated data points}.
    Suppose that Assumption \ref{assumption} holds.
    Then,  {w.p.h. than $1-\frac{k}{N}$,}
    \begin{equation}
        \| \mathbf{z}_i^{(k)} - \tilde{\mathbf{z}}_i^{(k)}\|_2 = \mathcal{O}(d^{3k/2}\epsilon_d), \quad 1\leq i \leq N, \quad 1\leq k \leq p. \notag
    \end{equation}
\end{lemma}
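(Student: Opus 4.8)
<br>

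The plan is to prove the bound by induction on the prolongation level $k$, tracking how errors in the approximate jet data $\tilde{\mathbf{z}}_i^{(k-1)}$ propagate through the two operations performed at each step: (i) the SVD--GMLS tangent-vector estimation on the approximate data, and (ii) the solution of the linear system $\tilde{\mathscr{X}}_i = \tilde{\mathscr{A}}_i^{-1}\tilde{\mathscr{B}}_i$ that produces the next-order derivatives $\tilde{\mathbf{u}}_i^{(k)}$. The base case $k=1$ is essentially a direct application of Proposition~\ref{proposition: GMLS error}: the data $\mathbf{z}_i^{(0)}$ lies exactly on $\mathscr{S}^{(0)}$, so the GMLS tangent estimates $\hat{\mathbf{T}}_{1,i}^j$ have error $\mathcal{O}(\epsilon_d)$ per entry, hence $\mathcal{O}(d^{1/2}\epsilon_d)$ in the $\ell^2$ norm after accounting for the $d$ (or $d+m$) components; then solving the exact linear system $\mathscr{B}_i = \mathscr{A}_i\mathscr{X}_i$ versus the perturbed one $\tilde{\mathscr{B}}_i = \tilde{\mathscr{A}}_i\tilde{\mathscr{X}}_i$, using $\|\mathscr{A}_i^{-1}\|_2 = \mathcal{O}(1)$ from Assumption~\ref{assumption} and the standard perturbation bound $\|\mathscr{X}_i - \tilde{\mathscr{X}}_i\|_2 \lesssim \|\mathscr{A}_i^{-1}\|_2(\|\mathscr{B}_i - \tilde{\mathscr{B}}_i\|_2 + \|\mathscr{A}_i - \tilde{\mathscr{A}}_i\|_2\|\tilde{\mathscr{X}}_i\|_2)$, yields $\|\mathbf{z}_i^{(1)} - \tilde{\mathbf{z}}_i^{(1)}\|_2 = \mathcal{O}(d^{3/2}\epsilon_d)$, with the probability $1-\tfrac1N$ coming from the single GMLS invocation. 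The factor $d^{3/2}$ should be accounted for carefully: one factor of $d^{1/2}$ from summing componentwise GMLS errors across $\mathcal{O}(d)$ coordinates, and the remaining $d$ from the dimension-dependent constants in forming and inverting the $d\times d$ system $\mathscr{A}_i$.

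For the inductive step, assume $\|\mathbf{z}_i^{(k-1)} - \tilde{\mathbf{z}}_i^{(k-1)}\|_2 = \mathcal{O}(d^{3(k-1)/2}\epsilon_d)$ holds w.p.h. than $1 - \tfrac{k-1}{N}$. The GMLS procedure is now applied to the \emph{perturbed} point cloud $\{\tilde{\mathbf{z}}_i^{(k-1)}\}$ rather than to exact samples of $\mathscr{S}^{(k-1)}$, so I must combine two contributions to the tangent-estimation error: the intrinsic GMLS error $\mathcal{O}(\epsilon_d)$ from Proposition~\ref{proposition: GMLS error} (applied to the exact manifold, contributing the fresh $-\tfrac1N$ to the probability, giving $1-\tfrac kN$ in total), plus the propagated error from feeding in perturbed input data. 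For the latter I would invoke a stability estimate for the SVD/GMLS map --- essentially that the least-squares polynomial fit and its derivative at $0$ depend Lipschitz-continuously (with dimension-dependent constant) on the input point positions --- so that the perturbation $\mathcal{O}(d^{3(k-1)/2}\epsilon_d)$ in the data induces a perturbation of the same order (up to a $\mathcal{O}(d)$-type constant) in $\hat{\mathbf{T}}_{k,i}^j$. Adding these and then pushing through the linear-system solve exactly as in the base case --- now with $\mathscr{A}_i, \mathscr{B}_i$ themselves perturbed at order $d^{3(k-1)/2}\epsilon_d$ --- multiplies the error by another $\mathcal{O}(d^{3/2})$, yielding $\|\mathbf{z}_i^{(k)} - \tilde{\mathbf{z}}_i^{(k)}\|_2 = \mathcal{O}(d^{3k/2}\epsilon_d)$, which closes the induction. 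The union bound over the $k$ independent GMLS invocations gives the stated probability $1 - \tfrac kN$.

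The main obstacle I anticipate is the stability estimate for the SVD--GMLS tangent map under perturbation of the input point cloud: Proposition~\ref{proposition: GMLS error} only controls the error when the data lies exactly on the manifold, but in the recursion the input is itself noisy, and one needs that the least-squares solve for the polynomial coefficients $\beta_{i,j}^{(r)}$ (and the subsequent differentiation and orthogonalization to get $\hat{\mathbf{N}}_i$) is well-conditioned uniformly in $i$ and $N$. This requires a lower bound on the smallest singular value of the local Vandermonde-type design matrix built from the neighbor tangent coordinates $\bm{\tau}_w$ --- a condition that is morally the quasi-uniformity of the sample, and which in spirit parallels Assumption~\ref{assumption}. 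A secondary subtlety is bookkeeping the dimension factors: I would state clearly that $d^{3k/2}$ is not claimed sharp, only that the constant may depend on $d$ as indicated, and absorb all $m$-dependence and $\binom{n+r-1}{r}$-type combinatorial factors into the big-$\mathcal{O}$ constant since $D_p$, $m$, and $\ell$ are fixed. I would also note that the neighbor sets and the regular parameterizations $\bm{\varsigma}^{(i)}$ are held fixed across levels (per Assumption~\ref{assumption}), so no new randomness enters beyond the $p$ GMLS fits.
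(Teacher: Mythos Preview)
Your proposal is correct and follows essentially the same route as the paper: induction on $k$, the base case via Proposition~\ref{proposition: GMLS error} combined with the standard perturbation bound \eqref{eq: abs error bound} for the linear system $\mathscr{B}_i=\mathscr{A}_i\mathscr{X}_i$ (with the $d^{3/2}$ factor coming from $\|\Delta\mathscr{A}_i\|_2\le d\|\Delta\mathscr{A}_i\|_{\max}=\mathcal{O}(d\epsilon_d)$ and $\|\mathscr{B}_i\|_2\le\sqrt{d}\|\mathscr{B}_i\|_{\max}=\mathcal{O}(d^{1/2})$), the inductive step by splitting the tangent error via the triangle inequality into the intrinsic GMLS error plus a propagated term, and the union bound for the probability.

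The one place where the paper is lighter than you anticipate is your ``main obstacle.'' Rather than analyzing the conditioning of the Vandermonde design matrix, the paper simply observes that the GMLS tangent map $\hat{T}^{(k)}$ is a polynomial of degree $\ell-1$ on a neighborhood of $\mathbf{z}_i^{(k)}$, hence has derivative bounded by some constant $c$ independent of $N$, and then Taylor-expands $\hat{T}^{(k)}(\tilde{\mathbf{z}}_i^{(k)})$ about $\mathbf{z}_i^{(k)}$ to get $\|\hat{T}^{(k)}(\tilde{\mathbf{z}}_i^{(k)})-\hat{T}^{(k)}(\mathbf{z}_i^{(k)})\|_2=\mathcal{O}(d^{3k/2}\epsilon_d)$. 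Your instinct that this implicitly requires the least-squares coefficients $\beta_{i,j}^{(r)}$ to be uniformly bounded (i.e., a well-conditioned local design matrix) is correct---the paper's ``$c$ independent of $N$'' tacitly assumes exactly that---so your more careful framing is, if anything, more honest about what is being used.
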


\begin{proof}
    Define the tangent vector function $T^{(k)}(\mathbf{z}_i^{(k)}) = D\iota^{(k)}(\bm{\varsigma}^{(i)})\in \mathbb{R}^{D_k\times d}$ which maps a data point in the $k$th jet space to the corresponding tangent space at that point.
    Here, \(\iota^{(k)}\) is the embedding of the $k$th jet-space manifold \(\mathscr{S}^{(k)}\) into its ambient space $M^{(k)}$.
    Let $\hat{T}^{(k)}$ be the GMLS approximation of the true tangent vector function
    (see Remark \ref{remark: embedding} and preceding discussion).
    Let $\mathbf{T}_{k,i}^j$ for $j=1,\dots,d$ denote the collection of tangent vectors at $\mathbf{z}_i^{(k)}$ with local coordinates $\bm{\varsigma}^{(i)} = (\varsigma_{1}^{(i)},\dots,\varsigma_{d}^{(i)})\in \mathbb{R}^{d}$.
    Let $\hat{\mathbf{T}}_{k,i}^j$ represent the GMLS approximation of these tangent vectors.

    We first consider the case when $k=0$ with given data $\mathbf{z}_i^{(0)}$.
    Explicitly, we express the true tangent vectors as $\mathbf{T}_{0,i}^j$ as defined in \eqref{eq:tanvector}.
\comment{\begin{equation}
\mathbf{T}_{0,i}^j
= 
\begin{bmatrix}
\frac{\partial (x_i)_1}{\partial \varsigma_{j}^{(i)}} & \frac{\partial (x_i)_2}{\partial \varsigma_{j}^{(i)}} & \cdots & \frac{\partial (x_i)_d}{\partial \varsigma_{j}^{(i)}} &
\frac{\partial (u_i)_1}{\partial \varsigma_{j}^{(i)}} & \cdots & \frac{\partial (u_i)_{m}}{\partial \varsigma_{j}^{(i)}}
\end{bmatrix} \in \mathbb{R}^{d+m},
\quad j=1,\dots,d.\notag
\end{equation}}
    By Proposition \ref{proposition: GMLS error}, we conclude that w.p.h.~than $1-\frac{1}{N}$, $\hat{\mathbf{T}}_{0,i}^j =  \mathbf{T}_{0,i}^j + \mathcal{O}(\epsilon_{d})$, in a pointwise sense.

    With an error bound on the tangent vectors established, we next quantify how this approximation error propagates to the prolongated coordinates $\mathbf{z}_i^{(1)}$ obtained from the derivatives represented by these tangent vectors. The chain rule implies that the true derivatives satisfy the linear system $\mathscr{B}_i=\mathscr{A}_i\mathscr{X}_i$ in \eqref{eq: tangent linear system} 
    \comment{
    for $b=1,\dots,m$,
    \begin{equation}\label{eq: tangent linear system v2}
\begin{bmatrix}
\frac{\partial (u_i)_b}{\partial \varsigma_1^{(i)}} \\
\vdots \\
\frac{\partial (u_i)_b}{\partial \varsigma_d^{(i)}}
\end{bmatrix}
=
\begin{bmatrix}
\frac{\partial (x_i)_1}{\partial \varsigma_1^{(i)}} & \dots & \frac{\partial (x_i)_d}{\partial \varsigma_1^{(i)}} \\
\vdots & & \vdots \\
\frac{\partial (x_i)_1}{\partial \varsigma_d^{(i)}} & \dots & \frac{\partial (x_i)_d}{\partial \varsigma_d^{(i)}}
\end{bmatrix}
\begin{bmatrix}
\frac{\partial (u_i)_b}{\partial (x_i)_1} \\
\vdots \\
\frac{\partial (u_i)_b}{\partial (x_i)_d}
\end{bmatrix},
\end{equation}}
which is understood to be evaluated at $\bm{\varsigma}^{(i)} = \mathbf{0}$, where the superscript indexes the $i$th data point.
The GMLS approximation gives a perturbed linear system $\tilde{\mathscr{B}}_i = \tilde{\mathscr{A}}_i \tilde{\mathscr{X}}_i$, where $\tilde{\mathscr{B}}_i = \mathscr{B}_i + \Delta \mathscr{B}_i$ and $\tilde{\mathscr{A}}_i = \mathscr{A}_i + \Delta \mathscr{A}_i$.
The assumption of a regular parameterization ensures that $\mathscr{A}_i$ is invertible.
Expanding the perturbed equation and ignoring doubly infinitesimal terms, we find that
\comment{
    \begin{equation}
        \hat{\mathscr{X}}_i-\mathscr{X}_i = \Delta\mathscr{X}_i = \mathscr{A}_i^{-1}\Big(\Delta\mathscr{B}_i - (\Delta\mathscr{A}_i)\mathscr{X}_i\Big),\notag
    \end{equation}
    which implies the following inequality}
    \begin{equation}\label{eq: abs error bound}
        \|\Delta \mathscr{X}_i\|_2 \leq \|\mathscr{A}_i^{-1}\|_2\left(\\|\Delta\mathscr{B}_i\|_2 + \|\Delta \mathscr{A}_i\|_2\|\mathscr{A}_i^{-1}\|_2\|\mathscr{B}_i\|_2 \right). 
    \end{equation}
\comment{    To establish an asymptotic bound, note that the regularity assumptions imply that
    \begin{equation}\label{eq: inequalities 1}
    \begin{split}
        \|\mathscr{A}_i^{-1}\|_2  &= \mathcal{O}(1).
    \end{split}
    \end{equation}
    Note also the following inequalities, which hold for arbitrary $d$-dimensional vectors and $d\times d$ matrices
    \begin{equation}\label{eq: inequalities 2}
        \begin{split}
            \|\mathscr{B}_i\|_2 \leq \sqrt{d}\|\mathscr{B}_i\|_{\max} &= \mathcal{O}(d^{1/2}),
            \\
            \|\Delta \mathscr{B}_i\|_2 \leq \sqrt{d}\|\Delta \mathscr{B}_i\|_{\max} &= \mathcal{O}(d^{1/2} \epsilon_d),
            \\
            \|\Delta \mathscr{A}_i\|_2 \leq d \|\Delta \mathscr{A}_i\|_{\max} &= \mathcal{O}(d\epsilon_d).
        \end{split}
    \end{equation}}
    By Assumption~\ref{assumption}, the standard norm inequalities, $\|\mathscr{B}_i\|_2 \leq \sqrt{d}\|\mathscr{B}_i\|_{\max} =\mathcal{O}(d^{1/2})$, $\|\Delta \mathscr{B}_i\|_2 \leq \sqrt{d}\|\Delta \mathscr{B}_i\|_{\max} = \mathcal{O}(d^{1/2} \epsilon_d)$, $\|\Delta \mathscr{A}_i\|_2 \leq d \|\Delta \mathscr{A}_i\|_{\max} = \mathcal{O}(d\epsilon_d)$, and that the components of $\tilde{\mathscr{X}}_i$ are the approximations $\tilde{\mathbf{u}}_i^{(1)}$, we have w.p.h.~than $1-1/N$, 
    \begin{equation}
    \|\tilde{\mathbf{z}}_i^{(1)} -  \mathbf{z}_i^{(1)} \|_2 =\|\Delta\mathscr{X}_i\|_2= \mathcal{O}(d^{3/2}\epsilon_{d}).\notag
    \end{equation}
    
    We now have data $\tilde{\mathbf{z}}_i^{(1)} = \{(\mathbf{x}_i,\mathbf{u}_i,\tilde{\mathbf{u}}_i^{(1)})\}$.
    Arguments similar to the above show that passing from tangent vector to derivative retains the order of accuracy with an additional multiplicative factor of $d^{3/2}$.
    Consequently, it suffices to consider the GMLS error when approximating the tangent vectors at the \textit{approximated} prolongated data points $\tilde{\mathbf{z}}_i^{(1)}$.
    We consider the difference
    \begin{equation}
            \|\hat T^{(1)}(\tilde{\mathbf{z}}_i^{(1)}) - T^{(1)}(\mathbf{z}_i^{(1)})\|_2 \leq \|\hat T^{(1)}(\tilde{\mathbf{z}}_i^{(1)}) - \hat T^{(1)}(\mathbf{z}_i^{(1)})\|_2 + \|\hat T^{(1)}({\mathbf{z}}_i^{(1)}) - T^{(1)}(\mathbf{z}_i^{(1)})\|_2.\notag
    \end{equation}
    One may conclude immediately from Proposition \ref{proposition: GMLS error} that $\|\hat T^{(1)}({\mathbf{z}}_i^{(1)}) - T^{(1)}(\mathbf{z}_i^{(1)})\|_2 = \mathcal{O}(\epsilon_{d})$.
    Since the GMLS map $\hat{T}^{(1)}$ is a polynomial function of degree $\ell-1$ (for $\ell\geq 2$) on a neighborhood of $\mathbf{z}_i^{(1)}$, there exists a constant $c>0$ independent of $N$ such that $\|D\hat{T}^{(1)}(y)\|_2\leq c$ for all $y$ in a neighborhood of $\mathbf{z}_i^{(1)}$.
    Therefore, we may write
    \begin{equation}
        \hat T^{(1)}(\tilde{\mathbf{z}}_i^{(1)}) = \hat T^{(1)}(\mathbf{z}_i^{(1)}) + D\hat{T}^{(1)}(\mathbf{z}_i^{(1)})(\tilde{\mathbf{z}}_i^{(1)} - \mathbf{z}_i^{(1)}) + \mathcal{O}(\|\tilde{\mathbf{z}}_i^{(1)} - \mathbf{z}_i^{(1)}\|_2^2).\notag
    \end{equation}
    It follows immediately that $\|\hat T^{(1)}(\tilde{\mathbf{z}}_i^{(1)}) - \hat T^{(1)}(\mathbf{z}_i^{(1)})\|_2 = \mathcal{O}(d^{3/2}\epsilon_{d})$, and so we have 
    \begin{equation}
        \|\hat T^{(1)}(\tilde{\mathbf{z}}_i^{(1)}) - T^{(1)}(\mathbf{z}_i^{(1)})\|_2 = \mathcal{O}(d^{3/2}\epsilon_{d}),\notag
    \end{equation}
    w.p.h.~than $1-\frac{2}{N}$ (because two GMLS applications were required - one for each prolongation step).
    That is, the error introduced by approximating tangent vectors at approximated points in jet space is effectively negligible.
    The previous arguments allow us to conclude that w.p.h.~than $1-\frac{2}{N}$,
    \begin{equation}
        \|\tilde{\mathbf{z}}_i^{(2)} -  \mathbf{z}_i^{(2)} \|_2 = \mathcal{O}(d^3\epsilon_{d}),\notag
    \end{equation}
    because $\mathcal{O}(d^{3/2}\epsilon_{d})$ error is accrued when passing from tangent vectors to derivatives (via error analysis on the resulting linear system).

    The arguments for the error associated with $\tilde{\mathbf{z}}_i^{(1)}$ and $\tilde{\mathbf{z}}_i^{(2)}$ may be applied recursively $p<\infty$ times to show that at the $p$th prolongation step,
$\| \tilde{\mathbf{z}}_i^{(p)} - {\mathbf{z}}_i^{(p)}\|_2 = \mathcal{O}(d^{3p/2}\epsilon_{d})$
    w.p.h.~than $1-\frac{p}{N}$, as was to be shown. 
\end{proof}

We now proceed to derive a bound on $\|\bm\Upsilon\|_2$.
This error bound will allow us to employ Theorem \ref{theorem: sine theorem} to make a final conclusion.

\begin{remark}
    The regularity assumption \ref{assumption} includes a uniform spectral bound on the singular values of $\mathscr{A}_i$, which is a strong assumption.
    Empirically, we find that the condition number of $\tilde{\mathscr{A}}_i$ is $\mathcal{O}(1)$ with no $d$-dependence.
    Therefore, we expect that the $\mathcal{O}(d^{3/2})$ error bound accrued when passing from tangent vector to derivative is overly pessimistic in the majority of cases.
\end{remark}

\begin{lemma}\label{proposition: P error}
    Let the assumptions in Lemma~\ref{proposition: jet data accuracy} hold and that $\ell > d$ for $d \geq 2$.
    Assume that $\|\mathbf{P}\|_2=\mathcal{O}(1)$, and that $L^{(p)}\Psi$ (defined according to \eqref{eq: Psi} and \eqref{eq: matrix P}) is $\mathcal{C}^{2}(M^ {(p)})$.
    Then w.p.h.~than $1-\frac{p+1}{N}$, 
    \begin{equation}
        \tilde{\mathbf{A}} = \mathbf{A} + \bm\Upsilon, \quad \|\bm\Upsilon\|_2 = \mathcal{O}\left(N\left(\frac{\log N}{N} \right)^{\frac{\ell}{d}}\right),\notag
    \end{equation}
    where $\mathbf{A} = \mathbf{P} \mathbf{P}^\top$ and $\tilde{\mathbf{A}} =\tilde{\mathbf{P}} \tilde{\mathbf{P}}^\top$.
    The constant in the big-O notation can depend on $d$, but is independent of $N$.
\end{lemma}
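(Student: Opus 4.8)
The plan is to track how the jet-data error of Lemma~\ref{proposition: jet data accuracy} propagates through the two remaining ingredients of $\tilde{\mathbf{P}}$, namely the factors $\tilde{\mathbf{L}}_i = L^{(p)}\Psi(\tilde{\mathbf{z}}_i^{(p)})$ and the normal estimates $\tilde{\mathbf{S}}_i$, bound $\|\tilde{\mathbf{P}}-\mathbf{P}\|_2$, and then pass to $\bm\Upsilon = \tilde{\mathbf{P}}\tilde{\mathbf{P}}^\top-\mathbf{P}\mathbf{P}^\top$. Write $\mathbf{L}_i = L^{(p)}\Psi(\mathbf{z}_i^{(p)})$, let $\mathbf{S}_i$ denote the true normal data (an orthonormal basis of the normal space of $\mathscr{S}^{(p)}$ at $\mathbf{z}_i^{(p)}$, identified with $\nabla^{(p)}[\Delta_\nu]$ up to the gauge freedom noted in Remark~\ref{remark: embedding}), and $\mathbf{p}_i = \mathbf{L}_i^\top\mathbf{S}_i$, so that $\mathbf{P}$ stacks the $\mathbf{p}_i$. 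Since $L^{(p)}\Psi\in\mathcal{C}^2(M^{(p)})$, its differential is locally Lipschitz on the bounded region containing the data; a first-order Taylor expansion at $\mathbf{z}_i^{(p)}$ together with Lemma~\ref{proposition: jet data accuracy} then gives $\|\tilde{\mathbf{L}}_i - \mathbf{L}_i\|_2 = \mathcal{O}(d^{3p/2}\epsilon_d)$ w.p.h.~than $1-p/N$, with the constant controlled by $\sup\|D(L^{(p)}\Psi)\|$; continuity on the same bounded region also gives $\|\mathbf{L}_i\|_2 = \mathcal{O}(1)$.

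For the normals, $\tilde{\mathbf{S}}_i$ is obtained from one further SVD--GMLS pass applied to the \emph{approximated} data $\tilde{\mathbf{z}}_i^{(p)}$, so the tangent-vector argument in the proof of Lemma~\ref{proposition: jet data accuracy} applies essentially verbatim: the GMLS tangent frame at $\tilde{\mathbf{z}}_i^{(p)}$ differs from the true tangent frame at $\mathbf{z}_i^{(p)}$ by $\mathcal{O}(d^{3p/2}\epsilon_d)$ — additively, $\mathcal{O}(\epsilon_d)$ from Proposition~\ref{proposition: GMLS error} plus $\mathcal{O}(d^{3p/2}\epsilon_d)$ from the input perturbation, using that the GMLS map is a polynomial and hence Lipschitz on a neighborhood — consuming one additional $1/N$ of failure probability, for a total of $1-(p+1)/N$. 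Passing to the orthogonal complement, a standard subspace-perturbation estimate (or Theorem~\ref{theorem: sine theorem} applied to the tangent-space orthogonal projectors) shows the normal frame inherits the same rate; after fixing the orientation and scaling of $\tilde{\mathbf{S}}_i$ consistently with $\mathbf{S}_i$ (the normal space is only defined up to an orthogonal change of basis), we get $\|\tilde{\mathbf{S}}_i - \mathbf{S}_i\|_2 = \mathcal{O}(d^{3p/2}\epsilon_d)$ and $\|\mathbf{S}_i\|_2 = \mathcal{O}(1)$.

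Combining, $\tilde{\mathbf{p}}_i - \mathbf{p}_i = (\tilde{\mathbf{L}}_i - \mathbf{L}_i)^\top\mathbf{S}_i + \mathbf{L}_i^\top(\tilde{\mathbf{S}}_i - \mathbf{S}_i) + (\tilde{\mathbf{L}}_i - \mathbf{L}_i)^\top(\tilde{\mathbf{S}}_i - \mathbf{S}_i)$, so $\|\tilde{\mathbf{p}}_i - \mathbf{p}_i\|_2 = \mathcal{O}(d^{3p/2}\epsilon_d)$ uniformly in $i$ (the last term being $\mathcal{O}(d^{3p}\epsilon_d^2)$, lower order since $\epsilon_d\to 0$). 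Stacking the $N$ blocks and using $\|[\mathbf{B}_1\,\cdots\,\mathbf{B}_N]\|_2 \le \sum_i\|\mathbf{B}_i\|_2$ gives $\|\mathbf{E}\|_2 := \|\tilde{\mathbf{P}}-\mathbf{P}\|_2 = \mathcal{O}(N d^{3p/2}\epsilon_d) = \mathcal{O}(N\epsilon_d)$ (a $\sqrt{N}$ improvement follows from a Frobenius bound but is not needed). Finally, $\bm\Upsilon = \mathbf{E}\mathbf{P}^\top + \mathbf{P}\mathbf{E}^\top + \mathbf{E}\mathbf{E}^\top$, so $\|\bm\Upsilon\|_2 \le 2\|\mathbf{P}\|_2\|\mathbf{E}\|_2 + \|\mathbf{E}\|_2^2 = \mathcal{O}(N\epsilon_d) + \mathcal{O}(N^2\epsilon_d^2)$ using $\|\mathbf{P}\|_2 = \mathcal{O}(1)$. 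The hypothesis $\ell > d$ is precisely what forces $N\epsilon_d = N^{1-\ell/d}(\log N)^{\ell/d}\to 0$, hence $N^2\epsilon_d^2 = \mathcal{O}(N\epsilon_d)$, and $\|\bm\Upsilon\|_2 = \mathcal{O}\!\left(N(\log N/N)^{\ell/d}\right)$ as claimed.

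I expect the main obstacle to be the $\tilde{\mathbf{S}}_i$ term: unlike $\tilde{\mathbf{L}}_i$, it is not a pointwise evaluation of a fixed smooth map but a basis for an approximate normal space, so one must (i) re-run the GMLS error analysis at \emph{perturbed} input points, (ii) convert a tangent-frame error into a normal-frame error through an orthogonal-complement perturbation bound, and (iii) pin down the basis/orientation/scaling gauge so that $\tilde{\mathbf{S}}_i$ is genuinely comparable to $\nabla^{(p)}[\Delta_\nu](\mathbf{z}_i^{(p)})$ and not merely spanning the same subspace. A secondary, bookkeeping-level issue is justifying $\|\mathbf{L}_i\|_2,\|\mathbf{S}_i\|_2 = \mathcal{O}(1)$ uniformly in $i$, which requires the data to lie in a fixed bounded region (implicit in the GMLS sampling assumptions) and the normal estimates to be normalized consistently.
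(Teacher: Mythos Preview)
Your proposal follows the same route as the paper: bound $\|\tilde{\mathbf{L}}_i-\mathbf{L}_i\|_2$ by a first-order Taylor expansion of $L^{(p)}\Psi$, bound $\|\tilde{\mathbf{S}}_i-\mathbf{S}_i\|_2$ by re-running the GMLS analysis of Lemma~\ref{proposition: jet data accuracy} once more on the perturbed data $\tilde{\mathbf{z}}_i^{(p)}$ (this is exactly where the extra $1/N$ of failure probability enters), combine to a per-block error, stack, and apply the triangle inequality to $\bm\Upsilon=\mathbf{P}\mathbf{E}^\top+\mathbf{E}\mathbf{P}^\top+\mathbf{E}\mathbf{E}^\top$.

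The only real difference is the bookkeeping at the stacking step. The paper passes through the max norm, obtaining $\|\mathbf{E}\|_{\max}=\mathcal{O}(\epsilon_d)$ and hence $\|\mathbf{E}\|_2\le\sqrt{KN(D_p-d)}\,\|\mathbf{E}\|_{\max}=\mathcal{O}(\sqrt{N}\,\epsilon_d)$; it also derives $\|\mathbf{P}\|_2=\mathcal{O}(\sqrt{N})$ directly rather than using the stated $\mathcal{O}(1)$ assumption, so that $\|\bm\Upsilon\|_2=\mathcal{O}(N\epsilon_d+N\epsilon_d^2)=\mathcal{O}(N\epsilon_d)$ follows from $\epsilon_d\to 0$ alone. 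Your cruder block bound $\|\mathbf{E}\|_2=\mathcal{O}(N\epsilon_d)$ together with the assumed $\|\mathbf{P}\|_2=\mathcal{O}(1)$ gives $\mathcal{O}(N\epsilon_d+N^2\epsilon_d^2)$, and you then invoke $\ell>d$ to kill the quadratic term; in the paper's arrangement that hypothesis is not actually needed for this lemma (it is only used downstream in Theorem~\ref{theorem: final error analysis}). Both arguments are valid and reach the same conclusion. Your caution about the gauge freedom in $\tilde{\mathbf{S}}_i$ is apt; the paper's proof glosses over that point as well.
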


\begin{proof}
Our goal is to provide error estimates for the entries of the objects $\tilde{\mathbf{S}}_i$ and $\tilde{\mathbf{L}}_i$ used to construct $\tilde{\mathbf{P}}$ (defined according to \eqref{eq: stacked matrix}). Recall that we may view $\nabla^{(p)}[\Delta_\nu](\mathbf{z}_i^{(p)})$ as the normal vector to the manifold $\mathscr{S}^{(p)}$ at the point $\mathbf{z}_i^{(p)}$.
We write
\begin{equation}
    \mathbf{S}_i = \begin{bmatrix}
        \nabla^{(p)}[\Delta_1](\mathbf{z}_i^{(p)}) & \dots & \nabla^{(p)}[\Delta_{D_p-d}](\mathbf{z}_i^{(p)})
    \end{bmatrix}\notag
\end{equation}
as the true object to be approximated by $\tilde{\mathbf{S}}_i$.
Numerically, one recovers these normal vectors (or equivalently the corresponding tangent vectors) by applying the GMLS procedure to the available approximated data $\tilde{\mathbf{z}}_i^{(p)}$.
The arguments in Proposition \ref{proposition: jet data accuracy} allow us to conclude that w.p.h.~than $1-\frac{p+1}{N}$, $\|\tilde{\mathbf{S}}_i - \mathbf{S}_i\|_2 = \mathcal{O}(d^{3p/2}\epsilon_{d})$.

Next, by the smoothness of $\Psi$, we may write
\begin{equation}
    L^{(p)}\Psi(\tilde{ \mathbf{z}}_i^{(p)}) = L^{(p)}\Psi({ \mathbf{z}}_i^{(p)}) + DL^{(p)}\Psi({ \mathbf{z}}_i^{(p)})(\tilde{\mathbf{z}}_i^{(p)} - \mathbf{z}_i^{(p)}) + \mathcal{O}(\|(\tilde{\mathbf{z}}_i^{(p)} - \mathbf{z}_i^{(p)})\|_2^2),\notag
\end{equation}
from whence it follows that  $\|\tilde{\mathbf{L}}_i - L^{(p)}\Psi({ \mathbf{z}}_i^{(p)})\|_2 = \mathcal{O}(d^{3p/2}\epsilon_d),$ again by the results of Proposition \ref{proposition: jet data accuracy}.
Notice that if the error bound for $\tilde{\mathbf{S}}_i$ holds, so does the error bound for $\tilde{\mathbf{L}}_i$ because this object depends only on prolongated data of a lower level.

The entries of $\tilde{\mathbf{P}}$ are linear combinations of the entries of $\tilde{\mathbf{L}}_i$ and $\tilde{\mathbf{S}}_i$.
Hence, $\mathcal{O}(d^{3p/2}\epsilon_{d})$ error dominates the entries of $\tilde{\mathbf{P}}$ and we may write w.p.h.~than $1-\frac{p+1}{N}$,
\begin{equation}
    \tilde{\mathbf{P}} = \mathbf{P} + \mathbf{E}, \quad \|\mathbf{E}\|_{\max} = \mathcal{O}(d^{3p/2}\epsilon_d).\notag
\end{equation}
Notice that because the dimension of $\mathbf{E}$ depends on $N$, we have
\begin{equation}
    \|\mathbf{E}\|_2 \leq \sqrt{K}\sqrt{N(D_p-d)}\|\mathbf{E}\|_{\max} = \mathcal{O}(\sqrt{N}\epsilon_d),\notag
\end{equation}
where the constant in the big-O notation could depend on $d$.
A similar argument shows that $\|\mathbf{P}\|_2 = \mathcal{O}(\sqrt{N})$.
Writing $\tilde{\mathbf{P}} = \mathbf{P} + \mathbf{E}$ implies that $\bm\Upsilon = \mathbf{P}^\top \mathbf{E} + \mathbf{E}^\top \mathbf{P} + \mathbf{E}^\top \mathbf{E}$, so
\begin{equation}
    \|\bm\Upsilon\|_2 \leq 2\|\mathbf{P}\|_2 \|\mathbf{E}\|_2 + \|\mathbf{E}\|_2^2 = \mathcal{O}\left(N\epsilon_{d} + N\epsilon_{d}^2\right) = \mathcal{O}(N\epsilon_d),\label{Upsilon}
\end{equation}
where the constant in the big-O notation depends on $d$, but is independent of $N$.
\end{proof}

The results of the previous subsection position us to apply the Davis and Kahan sine theorem \ref{theorem: sine theorem} to the matrices $\mathbf{A}$ and $\tilde{\mathbf{A}}$ to study the fidelity with which our GMLS algorithm recovers the nullspace of $\mathbf{P}$.
We now state and prove our main result.

\begin{theorem}\label{theorem: final error analysis}
Let the assumptions of Lemma \ref{proposition: P error} hold.
Let $\bm\Theta$ denote the diagonal matrix of principal angles between the nullspace of $\mathbf{P}$ and the numerical nullspace of $\tilde{\mathbf{P}}$ (or equivalently $\mathbf{A}=\mathbf{P} \mathbf{P}^\top$ and $\tilde{\mathbf{A}}=\tilde{\mathbf{P}}\tilde{\mathbf{P}}^\top$, respectively).
Denote the eigenvalues of $\mathbf{A}$ as $0=\lambda_1=\dots=\lambda_r<\lambda_{r+1}\leq \dots \leq \lambda_K$, and assume that $\lambda_{r+1} = \omega(\|\bm \Upsilon\|_2)$, as $\|\bm \Upsilon\|_2 \to 0$, where $\|\bm\Upsilon\|_2$ is defined in \eqref{Upsilon}.
Then w.p.h.~than $1-\frac{p+1}{N}$,
\begin{equation}
    \|\sin\bm\Theta\|_2 = \mathcal{O}\left(N\left(\frac{\log N}{N}\right)^{\ell/d} \right)\notag
\end{equation}
as $N\to \infty$.
The big-$\mathcal{O}$ notation could depend on $d$ and inversely on the spectral gap $\lambda_{r+1}$, but is independent of $N$.
\end{theorem}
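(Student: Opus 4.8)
The plan is to apply the Davis–Kahan sine theorem (Theorem~\ref{theorem: sine theorem}) to the Hermitian pair $\mathbf{A} = \mathbf{P}\mathbf{P}^\top$ and $\tilde{\mathbf{A}} = \tilde{\mathbf{P}}\tilde{\mathbf{P}}^\top = \mathbf{A} + \bm\Upsilon$, with the perturbation bound $\|\bm\Upsilon\|_2 = \mathcal{O}(N\epsilon_d)$ already supplied by Lemma~\ref{proposition: P error}. First I would set up the spectral decompositions: write $\mathbf{A} = \mathbf{H}_0 \mathbf{A}_0 \mathbf{H}_0^* + \mathbf{H}_1 \mathbf{A}_1 \mathbf{H}_1^*$, where $\mathbf{H}_0$ spans the nullspace of $\mathbf{A}$ (so $\mathbf{A}_0 = \mathbf{0}$, corresponding to eigenvalues $\lambda_1 = \dots = \lambda_r = 0$) and $\mathbf{H}_1$ spans the orthogonal complement (eigenvalues $\lambda_{r+1} \le \dots \le \lambda_K$). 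Correspondingly, $\tilde{\mathbf{A}} = \mathbf{F}_0 \bm\Lambda_0 \mathbf{F}_0^* + \mathbf{F}_1 \bm\Lambda_1 \mathbf{F}_1^*$, where $\mathbf{F}_0$ spans the numerical nullspace of $\tilde{\mathbf{P}}$ (the invariant subspace associated with the $r$ smallest eigenvalues of $\tilde{\mathbf{A}}$) and $\mathbf{F}_1$ the complement.

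Next I would choose the separating interval. Since the spectrum of $\mathbf{A}_0$ is $\{0\}$, take $[a,b] = [0,0]$. The perturbed eigenvalues in $\bm\Lambda_1$ (the $K-r$ largest eigenvalues of $\tilde{\mathbf{A}}$) are, by Weyl's inequality, each at least $\lambda_{r+1} - \|\bm\Upsilon\|_2$. The hypothesis $\lambda_{r+1} = \omega(\|\bm\Upsilon\|_2)$ guarantees that for $N$ large this quantity is positive and comparable to $\lambda_{r+1}$; concretely, one can take $\delta = \tfrac{1}{2}\lambda_{r+1}$, so that the spectrum of $\bm\Lambda_1$ lies outside $(a - \delta, b + \delta) = (-\tfrac{1}{2}\lambda_{r+1}, \tfrac{1}{2}\lambda_{r+1})$ once $\|\bm\Upsilon\|_2 < \tfrac{1}{2}\lambda_{r+1}$. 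Then I would bound the residual $\mathbf{R} = \tilde{\mathbf{A}}\mathbf{H}_0 - \mathbf{H}_0 \mathbf{A}_0 = \tilde{\mathbf{A}}\mathbf{H}_0 = (\mathbf{A} + \bm\Upsilon)\mathbf{H}_0 = \bm\Upsilon \mathbf{H}_0$, using $\mathbf{A}\mathbf{H}_0 = \mathbf{0}$; hence $\|\mathbf{R}\|_2 \le \|\bm\Upsilon\|_2 \|\mathbf{H}_0\|_2 = \|\bm\Upsilon\|_2$. Theorem~\ref{theorem: sine theorem} then yields $\|\sin\bm\Theta\|_2 \le \delta^{-1}\|\mathbf{R}\|_2 \le 2\lambda_{r+1}^{-1}\|\bm\Upsilon\|_2$.

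Finally I would substitute the bound from Lemma~\ref{proposition: P error}, $\|\bm\Upsilon\|_2 = \mathcal{O}(N\epsilon_d)$ with $\epsilon_d = (\log N / N)^{\ell/d}$, to obtain
\begin{equation}
\|\sin\bm\Theta\|_2 = \mathcal{O}\!\left(\lambda_{r+1}^{-1} N \left(\frac{\log N}{N}\right)^{\ell/d}\right),\notag
\end{equation}
which, absorbing the fixed spectral gap into the constant, is the claimed rate. The probability statement $1 - \tfrac{p+1}{N}$ is inherited verbatim from Lemma~\ref{proposition: P error}, since that is the only probabilistic ingredient. The main subtlety I anticipate is the bookkeeping around the ``numerical nullspace'': because $\tilde{\mathbf{A}}$ generically has no exact kernel, one must be careful to identify $\mathbf{F}_0$ as precisely the invariant subspace of the $r$ smallest eigenvalues of $\tilde{\mathbf{A}}$, and to verify that the spectral gap condition $\lambda_{r+1} = \omega(\|\bm\Upsilon\|_2)$ is exactly what makes this identification consistent with the Davis–Kahan separation hypothesis — i.e., it ensures the perturbed ``small'' eigenvalues stay within $\delta$ of $0$ while the perturbed ``large'' eigenvalues stay outside, so that $\mathbf{F}_0$ really does track $\mathbf{H}_0$. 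Everything else is a routine application of Weyl's inequality and the triangle inequality.
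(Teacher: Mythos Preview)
Your proposal is correct and follows essentially the same route as the paper: both apply Davis--Kahan to $\mathbf{A}=\mathbf{P}\mathbf{P}^\top$ and $\tilde{\mathbf{A}}=\mathbf{A}+\bm\Upsilon$ with $[a,b]=[0,0]$, bound the residual by $\|\bm\Upsilon\|_2$ via $\mathbf{A}\mathbf{H}_0=\mathbf{0}$, and invoke Weyl's inequality to control the perturbed gap. The only cosmetic difference is that the paper sets $\delta=\tilde\lambda^*$ (the smallest eigenvalue of $\bm\Lambda_1$) and then Taylor-expands $\|\bm\Upsilon\|_2/(\lambda_{r+1}-\|\bm\Upsilon\|_2)$, whereas you take $\delta=\tfrac12\lambda_{r+1}$ directly; both yield the same $\mathcal{O}(\lambda_{r+1}^{-1}N\epsilon_d)$ bound.
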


Before we proof this theorem, we note that the spectral gap condition, $\lambda_{r+1}= \omega (\|\bm\Upsilon\|_2)$ is a reasonable assumption since $\|\bm\Upsilon\|_2 \to 0$ as $N\to 0$ (see \eqref{Upsilon}), so one can choose $N$ large enough such that this condition is satisfied in practice.  

\begin{proof}
    Note that both $\mathbf{A}$ and $\tilde{\mathbf{A}}$ are Hermitian matrices that satisfy the decomposition in \eqref{eq:def A}.
    \comment{, and hence we may write
     \begin{equation}
        \begin{split}
            \mathbf{A} &= \mathbf{H}_0 \mathbf{A}_0 \mathbf{H}_0^* + \mathbf{H}_1 \mathbf{A}_1 \mathbf{H}_1^*,
            \\
            \tilde {\mathbf{A}} &= \mathbf{F}_0 \bm{\Lambda}_0 \mathbf{F}_0^* + \mathbf{F}_1\bm{\Lambda}_1 \mathbf{F}_1^*,
        \end{split}\notag
    \end{equation}
    as a unitary diagonalization of $\mathbf{A}$ and $\tilde{\mathbf{A}}$.}
    Particularly, $\mathbf{A}_1$ is a diagonal matrix whose entries are the nonzero eigenvalues of $\mathbf{A}$, and the columns of $\mathbf{H}_1$ are the corresponding eigenvectors.
    The matrix $\mathbf{A}_0$ is the zero matrix (collecting the zero eigenvalues of $\mathbf{A}$), and the columns of $\mathbf{H}_0$ are the corresponding eigenvectors.
    The notation for $\tilde{\mathbf{A}}$ represents the perturbed counterparts to these quantities.
    Let $\tilde{\lambda}^*$ be the smallest entry of $\bm{\Lambda}_1$ and define 
      $  \mathbf{R} = (\mathbf{A}+\bm\Upsilon)\mathbf{H}_0 - \mathbf{H}_0 \mathbf{A}_0
    $ as in Theorem \ref{theorem: sine theorem}. 
    In relation to Theorem~ \ref{theorem: sine theorem}, we have $a,b=0$. 
    Choosing $\delta = \tilde{\lambda}^*>0$, it is clear that spectrum of $\bm{\Lambda}_1$ lies outside of $(-\tilde{\lambda}^*,\tilde{\lambda}^*)$.

    Using the fact that $\mathbf{H}_1^\top \mathbf{H}_0 = 0$, it follows that $\mathbf{A}\mathbf{H}_0 = 0$.
    Therefore,
    \begin{equation}
        \begin{split}
            \| \mathbf{R}\|_2 &= \|\bm\Upsilon \mathbf{H}_0\|_2 \leq \|\bm\Upsilon \|_2\|\mathbf{H}_0\|_2 \leq \|\bm\Upsilon \|_2. 
        \end{split}\notag
    \end{equation}
    Our next step is to attain a lower bound for $\tilde{\lambda}^*$. Note that $\lambda_{r+1}\leq \lambda_{r+2}\leq \cdots \leq \lambda_K$.
    Upon perturbation, each of these eigenvalues can increase or decrease by at most $\|\bm\Upsilon\|_2$ (see \cite{stewart1998perturbation,weyl1912asymptotische} for further details).
    Because $\lambda_{r+1}$ is the smallest eigenvalue of $\mathbf{A}_1$, $\lambda_{r+1}-\|\bm\Upsilon\|_2$ is a lower bound for all eigenvalues of $\bm\Lambda_1$, and so 
    \[
    \tilde{\lambda}^* \geq \lambda_{r+1}-\|\bm\Upsilon\|_2.
    \]
By Theorem~\ref{theorem: sine theorem} and Lemma~\ref{proposition: P error}, w.p.h.~than $1-\frac{p+1}{N}$,
    \begin{equation}
        \begin{split}
            \|\sin\bm\Theta\|_2 &\leq \frac{\|\bm\Upsilon\|_2}{\tilde{\lambda}^*}  \leq \frac{\|\bm\Upsilon\|_2}{\lambda_{r+1}-\|\bm\Upsilon\|_2} 
            = \frac{\|\bm\Upsilon\|_2}{\lambda_{r+1}} \left(1 +             \mathcal{O}\left(\frac{\|\bm\Upsilon\|_2}{\lambda_{r+1}}\right) \right)
        =    \mathcal{O}\left(\frac{N\epsilon_{d}}{\lambda_{r+1}}\right),
        \end{split} \notag
    \end{equation}   
    where we have used the assumption that $\lambda_{r+1} = \omega(\|\bm\Upsilon\|_2)$ as $\|\bm\Upsilon\|_2 \to 0$ to allow the Taylor expansion $\frac{1}{1-\varepsilon} = 1+\mathcal{O}(\varepsilon)$, with $\varepsilon = \frac{\|\bm\Upsilon\|_2}{\lambda_{r+1}} \ll 1$. 
    Note that the upper bound is inversely proportional to the spectral gap $\lambda_{r+1}$, which is independent of $N$. 
\end{proof}

Theorem \ref{theorem: final error analysis} establishes a bound on the error in our numerical procedure under the assumption that the symmetries of the system, i.e., the vector field $X_g$, can be represented exactly in a finite-dimensional basis. 
Otherwise, one may instead choose the best approximate vector field $X_g^*$ that minimizes the residual of the infinitesimal invariance condition,
\begin{equation}\label{eq: bias error}
    X_g^* = \arg\min_{c \in \mathbb{R}^K}
        \|L^{(p)}(\Psi c) \cdot \nabla^{(p)}[\Delta_\nu]\|_2,
\end{equation}
where the columns of $\Psi$ (defined in \eqref{eq: Psi}) span the approximation space such that $L^{(p)}\Psi \in C^2(M^{(p)})$. 
We remark that \eqref{eq: bias error} is a standard problem in approximation theory \cite{davis1970rotation} and will not be pursued further in this work.

The sampling assumption in this theorem can be relaxed to non-uniform distributions with bounded density functions. 
In such a case, the same error $\epsilon_d$ is valid in Proposition~\ref{proposition: GMLS error}; see Lemma D.3 in \cite{huang2025learning}. 
While one can relax the i.i.d. assumption, to the best of our knowledge additional assumptions, such as mixing conditions, are needed to access the non-i.i.d.~Bernstein type concentration inequality needed for such an analysis \cite{hang2017bernstein}. 
We do not pursue this avenue here because mixing assumptions are often too strict for our applications. 
For non-i.i.d.~data, one can also consider the deterministic bound in \cite{mirzaei2012generalized} and replace the $N^{-1}\log(N)$ factor in Proposition~\ref{proposition: GMLS error} with the fill distance. The resulting upper bound is a function of both $N$ and the fill distance, where the latter dependence on $N$ for arbitrary manifolds is unspecified. 
    

\section{Numerical Experiments}\label{sec5}


In this section, we verify the efficacy of our numerical procedure on several ordinary and partial differential equations.
Unless otherwise stated, we take $\epsilon=10^{-5}$ to be error threshold for our numerical algorithm.
All codes used to generate the figures in this section are publicly available at \url{https://github.com/MaxKreider/SymmetryAndScaling}.

\subsection{First order linear ODE}\label{ex: particular vs family}

As a pedagogical first example, we consider the first order, linear system,
 $\Delta(x,u,u_x) = u - u_x = 0$,
with general solution $u(x,C) = Ce^x$, where $C\in \mathbb{R}$ is an integration constant.
We adopt a linear ansatz for the form of the infinitesimal generators
\begin{equation}\label{eq: ex simple ansatz}
\begin{split}
    X_g = \xi_1(x,u)\partial_x + \eta_1(x,u)\partial_u \equiv (c_0+c_1 x+c_2u)\partial_x + (c_3+c_4x+c_5 u)\partial_u.
\end{split}
\end{equation}
This ansatz leads to two independent infinitesimal generators 
\begin{equation}\label{eq: ex simple gen}
    \begin{split}
        X_g^{(1)} &= c_0 \partial_x, \quad X_g^{(2)} = (c_5 u) \partial_u,
    \end{split}
\end{equation}
which indicates that the ODE admits both translation and scaling symmetry (see Appendix \ref{Appendix: example} for a derivation).

To apply our algorithm, we consider a family of solution curves \(\{(x_{i}, C_j, u_{i,j})\}\) defined by $u_{i,j} = u(x_i,C_j)= C_j e^{x_i},$ where the parameters $C_j$ are randomly drawn $1001$ times from the uniform distribution on $[1,2]$, and the points $x_i$ are randomly drawn $501$ times from the uniform distribution on $[-1,1]$.
This discretization generates a $501\times 1001$ non-uniform grid for the effective independent variables.
Our algorithm, with $\mathfrak K=15$ and $\ell=4$, generates approximations for the jet space data $\{(x_i,C_j, u_{i,j}, (\tilde u_x)_{i,j})\}$.
To generate the normal vectors at the last step, we modify $\mathfrak K = 25$ and $\ell=3$.
We emphasize that the free integration constant is treated as an effective independent variable, and therefore the prolongated data forms a two-dimensional manifold embedded in $\mathbb{R}^4$.

We approximate the coefficients $c_i$ in \eqref{eq: ex simple ansatz} by finding the numerical nullspace of $\tilde{\mathbf{P}}$, which is constructed according to \eqref{eq: stacked matrix}.
The singular values and singular vectors of the matrix $\tilde{\mathbf{P}}$ are shown in Fig.~\ref{fig: ex simple}(a)-(c).
We observe a clear spectral gap showing two nearly trivial singular values in \ref{fig: ex simple}(a).
The corresponding right singular vectors (Fig.~\ref{fig: ex simple}(b)-(c)) give a basis for the numerical nullspace of $\tilde{\mathbf{P}}$, and provide approximations for the coefficients $c_i$ in \eqref{eq: ex simple ansatz}.
Indeed, we recover \eqref{eq: ex simple gen}, indicating that our algorithm successfully identified the symmetry groups.

It is instructive to repeat this experiment, but with given data $\{(x_i,u_i)\}$ of the form $ u_i = u(t_i,1) = e^{x_i},$ where the $x_i$ are randomly sampled 1000 times from the uniform distribution on $[-2,1]$, but $C=1$ is held fixed.
We prolongate this data using our algorithm with $\mathfrak K = 10$ and $\ell = 3$, and note that now the jet space data $\{(x_i, u_i, (\tilde u_x)_i)\}$ forms a one-dimensional curve embedded in $\mathbb{R}^3$.
The spectral properties of the resulting $\tilde{\mathbf{P}}$ are summarized in Fig.~\ref{fig: ex simple}(d)-(f).
In this case, there is only one nearly trivial singular value separated from the rest, and the corresponding singular vector gives \textit{a fixed relation $c_0=c_5=c^*$} between the ansatz coefficients.
To understand this behavior, we analyze the single infinitesimal generator
$X_g = (c^*)\partial_x + (c^*u)\partial_u.$
This generator corresponds to a system of ODEs
        $\frac{d\tilde x}{ds} = c^*, \frac{d\tilde u}{ds} = c^*u,$
whose solution (with initial conditions $\tilde x(0)=x$ and $\tilde u(0) =u$) is $\tilde x(s) = c^*s + x$, $\tilde u(s) = u e^{c^*s}.$
The solution curve $u(x)=e^x$ under \textit{simultaneous perturbation} $(x,u) \to (\tilde x, \tilde u) = (c^*s + x, ue^{c^*s})$ remains invariant because
\begin{equation}
    \begin{split}
        \tilde u (\tilde x) = u(x) e^{c^*s}
        = e^x e^{c^*s}
        = e^{\tilde x - c^*s} e^{c^*s}
        = e^{\tilde x}.
    \end{split}\notag
\end{equation}
Therefore, simultaneous perturbation in $x$ and $u$ is necessary for the particular solution $u(x)=e^x$ to remain invariant.
In contrast, it is straightforward to verify that the family of solutions $u(x)=Ce^x$ for $C\in \mathbb{R}$ remains invariant under independent perturbations in either $x$ or $u$.
That is, the symmetry properties of a family of solutions may differ from that of a particular member of this family.
Our algorithm detects symmetries reflective of the data, not of the underlying dynamical system.

For completeness, we perform a convergence study for the case of fixed $C = 1$. 
The error metric is $\|\sin(\bm\Theta)\|_2$, which quantifies the angle between the numerically computed and true nullspaces of $\tilde{\mathbf P}$ and $\mathbf P$ (see Theorem~\ref{theorem: final error analysis}). 
In this setting, the true nullspace of $\mathbf P$ is spanned by $[1,0,0,0,0,1]^\top$. 
We randomly sample $x_i$ from the uniform distribution on $[-2,1]$ with $N \in \{2^q\times 10 \}_{q=3,\ldots, 12}$, and for each $N$ report the mean error over 100 independent trials. 
The parameters $\mathfrak K = 10$ and $\ell = 3$ are fixed throughout. 
The results, shown in Fig.~\ref{fig: ex simple}(f), indicate that the theoretical error bound in Theorem~\ref{theorem: final error analysis} is conservative for this example.

\begin{figure}[ht]
{\scriptsize \centering
\begin{tabular}{ccc}
\normalsize (a)  & \normalsize (b) & \normalsize (c) \\
    \includegraphics[scale=.28]{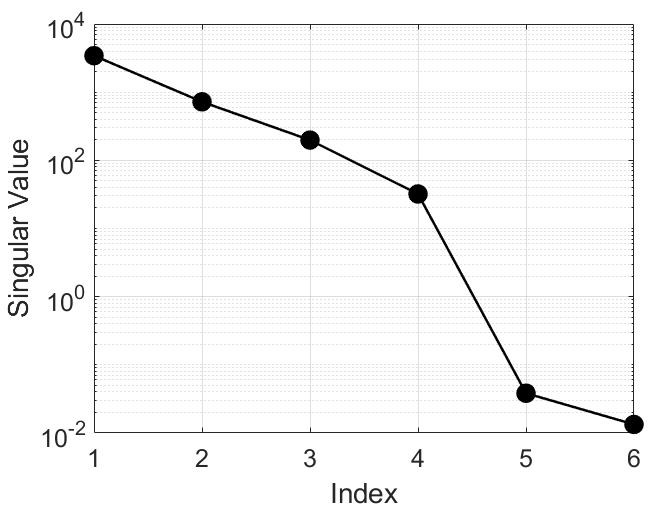} &
    \includegraphics[scale=.28]{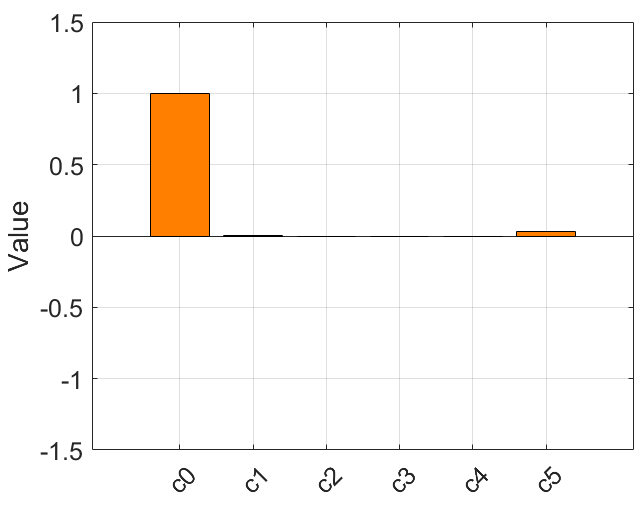} &
    \includegraphics[scale=.28]{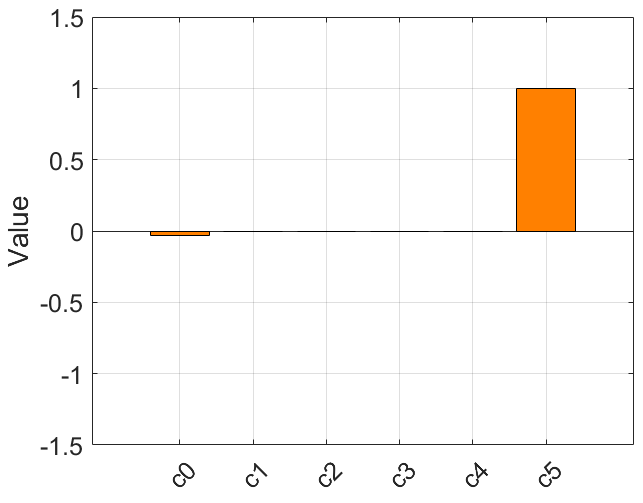} \\
    \normalsize (d)  & \normalsize (e) & \normalsize (f) \\
    \includegraphics[scale=.28]{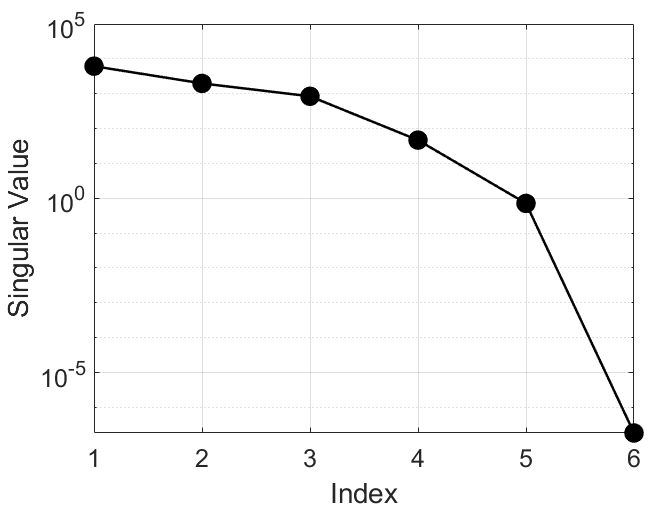} &
    \includegraphics[scale=.28]{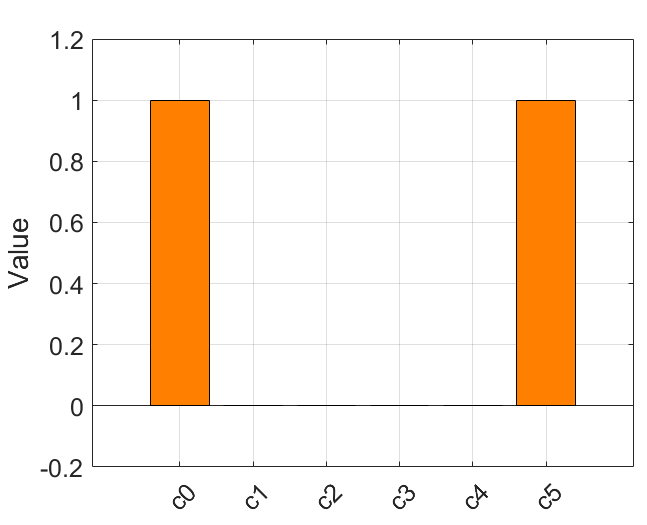} &
    \includegraphics[scale=.28] {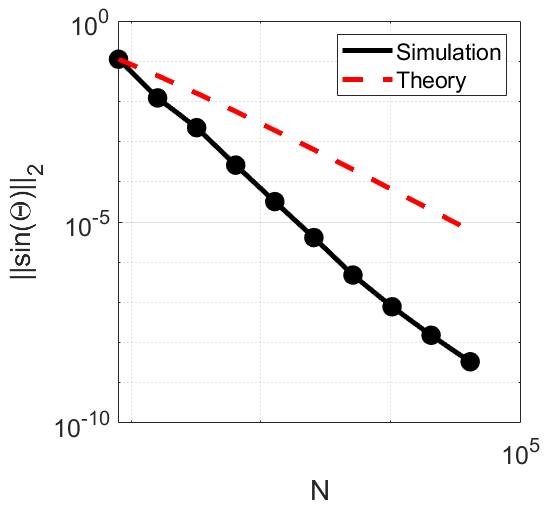} 
    \end{tabular}} 
    \caption{Singular value decomposition of $\tilde{\mathbf P}$ for the linear ODE, shown for a family of solutions \textbf{(a-c)} and with $C=1$ fixed \textbf{(d-f)}.
    \textbf{(a)} Semilog plot of singular values $\sigma_i$, showing two nearly vanishing modes. 
    \textbf{(b)} and \textbf{(c)} Bar plots of the right singular vectors corresponding to $\sigma_5$ and $\sigma_6$, which span the numerical nullspace of $\tilde{\mathbf P}$.
    \textbf{(d)} Semilog plot of $\sigma_i$ for fixed $C$, revealing a single nearly trivial mode. 
    \textbf{(e)} Bar plot of the corresponding right singular vector. 
    \textbf{(f)} Comparison of numerical (black) and theoretical (red, rescaled) error $\|\sin(\bm\Theta)\|_2$, averaged over 100 trials for each $N$.}
    \label{fig: ex simple}
\end{figure}

\subsection{Stuart-Landau oscillator}

Recall the Stuart-Landau oscillator \eqref{eq: SL oscillator} in Example~\ref{exp_de}, with general solution,
\begin{equation}
    \begin{split}
        x(t,C_1,C_2) = \frac{\cos(-t+C_1)}{\sqrt{1-\left(1-\frac{1}{C_2^2}\right)e^{-2t}}}, \quad y(t,C_1,C_2) = \frac{\sin(-t+C_1)}{\sqrt{1-\left(1-\frac{1}{C_2^2}\right)e^{-2t}}}.
    \end{split} \label{SL sol}
\end{equation}
Note that this system admits a stable limit-cycle solution which is a circle of unit radius.
We adopt a linear ansatz of the form
\begin{equation}
    X_g 
    =(c_0+c_1 t+c_2x + c_3y)\partial_t + (c_4+c_5t+c_6 x + c_7 y)\partial_x + (c_8+c_9t+c_{10} x + c_{11} y)\partial_y.
\end{equation}
This ansatz leads to two independent infinitesimal generators
\begin{equation}\label{eq: SL ansatz thing}
    \begin{split}
        X_{g,1}(t,x,y) &= c_0 \partial_t, \quad X_{g,2}(t,x,y) = (-c_7 y) \partial_x + (c_{10} x) \partial_y,
    \end{split}
\end{equation}
which indicates that \eqref{eq: SL oscillator} admits both $SO(2)$ and time-translation symmetry (recall Example \ref{ex: SL}). To apply our algorithm, we consider a family of solution curves $\{(t_i,(C_1)_j,(C_2)_k,x_{i,j,k},y_{i,j,k})\}$ defined by $x_{i,j,k} = x(t_i,(C_1)_j,(C_2)_k)$, and $y_{i,j,k} = y(t_i,(C_1)_j,(C_2)_k)$ with \eqref{SL sol}.
We randomly sample $x_i$ from the uniform distribution on $[0,\pi]$ ($N_x$ points), $(C_1)_j$ from the uniform distribution on $[0,2\pi]$ ($N_1$ points), and $(C_2)_k$ from the uniform distribution on $[1,1.3]$ ($N_2$ points), forming a non-uniform $N_x \times N_1 \times N_2$ mesh for the effective independent variables.
With $\mathfrak K = 40$ and $\ell = 4$ fixed, we perform a convergence study over $20$ independent trials for $N_x \in \{225,300,600\}$, $N_1 \in \{1,2,3\}$, and $N_2 \in \{95,126,252\}$.
The true nullspace of $\mathbf P$ is spanned by the infinitesimal generators in~\eqref{eq: SL ansatz thing}, see results in Fig.~\ref{fig:convg}(a).

We also study convergence for $N$ points randomly sampled from the uniform distribution on the unit circle, corresponding to the particular solution with $C_1 = 0$ and $C_2 = 1$ fixed.
We take $N \in  \{2^q\times 10 \}_{q=3,\ldots, 12}$, with $\mathfrak K = 10$ and $\ell = 3$ fixed.
In this case, the true nullspace is spanned by the vector $c$, where $c_1 = -c_7 = c_{10}$ and all other entries zero.
Results are shown in Fig.~\ref{fig:convg}(b).

\subsection{Transport equation}

We consider the transport equation $u_t - u_x = 0,$ and adopt a linear ansatz for the infinitesimal generator
\begin{equation}
        X_g 
        =(c_0 + c_1t + c_2 x + c_3 u)\partial_t + (c_4 + c_5t + c_6x + c_7u)\partial_x + (c_8 + c_9t + c_{10}x + c_{11}u)\partial_u.
\end{equation}
Under this ansatz, the prolongated generator takes the form,
\BEA
        X_g^{(1)} &=& X + \eta_{1,1}(t,x,u,u_t,u_x)\partial_{u_t} + \eta_{1,2}(t,x,u,u_t,u_x)\partial_{u_x},
    \notag\\
        \eta_{1,1} &=& 
        c_9 + c_{11}u_t - c_1 u_t - c_3 (u_t)^2 - c_5 u_x - c_7 u_xu_t,\notag
        \\
        \eta_{1,2} &=& 
        c_{10} + c_{11}u_x - c_6 u_x - c_2 u_t - c_7 (u_x)^2 - c_3 u_xu_t.
        \notag
\EEA
Analytically, one can show (after tedious calculation) that the infinitesimal invariance condition \eqref{eqn_sym} leads to $c_9 = c_{10}$, $c_2 + c_6 = c_1 + c_5,$
with all other coefficients free.
The transport equation admits many different particular solutions, each of which can exhibit a variety of symmetries.
For concreteness, we will explore the symmetries of $u(t,x) = \sin(t+x)$.
We randomly sample $t_i$ and $x_j$ from the uniform distribution on $[-1/2,1/2]\times [-1/2,1/2]$ to form a non-uniform $N\times N$ grid, and define $u_{i,j} = \sin(t_i + x_j)$, so that sampled data takes the form $\{(t_i,x_j,u_{i,j})\}$.
We perform a convergence analysis with $N\in \{56,80,113,160\}$, $\mathfrak K=20$ and $\ell=3$ fixed, and average results over 100 trials.
Here, the true nullspace is spanned by one vector with nonzero entries $c_1=c_6 = -c_1 = -c_5$, and all other entries zero.
The results are presented in Fig.~\ref{fig:convg}(c).

\subsection{Heat equation}\label{subsection: ex heat equation}

As our final example, we consider the heat equation $u_t - u_{xx} = 0$.
We adopt a linear ansatz for the infinitesimal generator
\BEA        X_g = 
         (c_0 + c_1t + c_2 x + c_3 u)\partial_t + (c_4 + c_5t + c_6x + c_7u)\partial_x + (c_8 + c_9t + c_{10}x + c_{11}u)\partial_u.\notag\EEA

Analytically, one can show (after tedious calculation) that the invariance condition $X_g^{(2)}[\Delta]=0$ whenever $\Delta = 0$ gives
\begin{equation}
    c_2=c_3=c_5=c_7=c_9=0, \quad c_1=2c_6, \notag
\end{equation}
with all other coefficients free (see \cite{olver1993applications} for further details).
The heat equation admits many different particular solutions, each of which can exhibit a variety of symmetries.
For concreteness, we will explore the symmetries of 
$u(t,x) = \frac{1}{\sqrt{4\pi t}}\exp(-x^2/(4t)).$
We sample $t_i$ and $x_j$ randomly from the uniform distribution on $[1,2]\times [1,2]$ to form a non-uniform $N\times N$ grid, and define $u_{i,j} = u(t_i,x_j)$,
so that the sampled data takes the form $\{(t_i,x_j,u_{i,j})\}$.
We perform a convergence analysis with $N\in\{56,80,113,160\}$, $\mathfrak K=40$ and $\ell=4$ fixed, and average results over 100 trials.
In this case, the true nullspace is spanned by one vector with nonzero entries $c_1=2c_6 = -c_{11}$, and all other entries zero.
The results are shown in the last panel of Fig.~\ref{fig:convg}.

\begin{figure}[ht]
{\scriptsize \centering
\begin{tabular}{cccc}
\normalsize (a)  & \normalsize (b) & \normalsize (c) & \normalsize (d) \\
    \includegraphics[scale=.28]{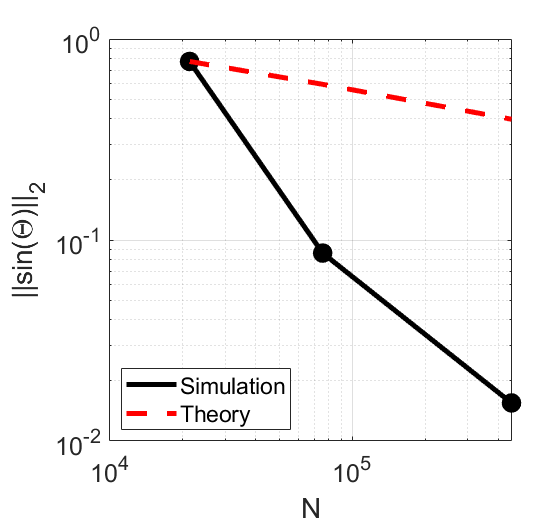} &
    \includegraphics[scale=.28]{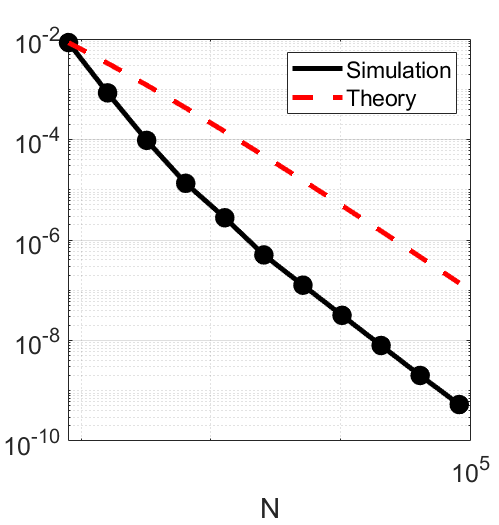} &
    \includegraphics[scale=.28]{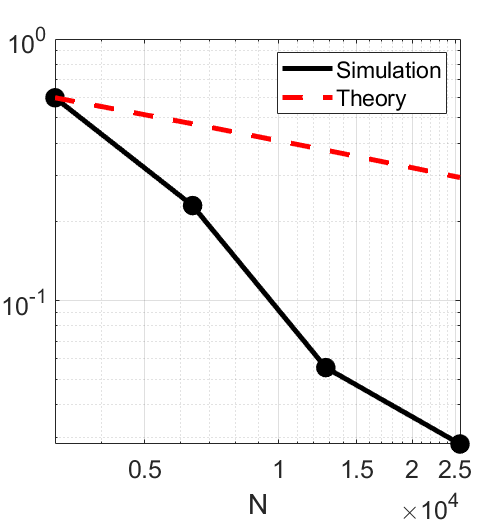} &
    \includegraphics[scale=.28]{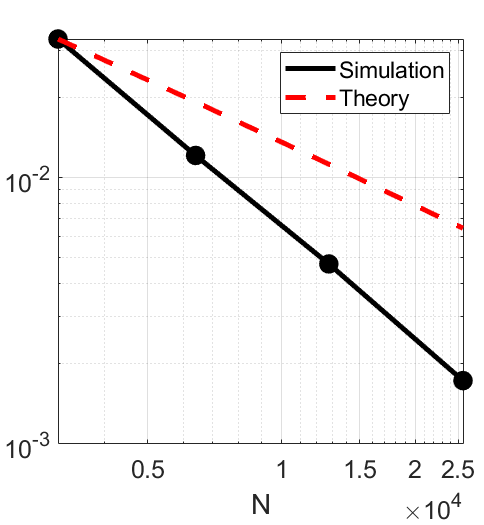}
    \end{tabular}}
    \caption{Convergence results for: \textbf{(a)} the SL oscillator with free integration constants, \textbf{(b)} the SL oscillator with fixed integration constants, \textbf{(c)} the transport equation, and \textbf{(d)} the heat equation.
    In each case, the theoretical error bound (red) is rescaled to match the initial value of the numerical error (black).
    The observed convergence rates consistently exceed the theoretical predictions, indicating that the bound is conservative.}
    \label{fig:convg}
\end{figure}


\section{Summary and discussion}\label{sec6}

Symmetry plays a central role in the study of DEs, providing a means to identify conserved quantities and reduce model complexity.
Classical Lie group theory provides an analytical means to identify system symmetries via the infinitesimal invariance criteria, provided that underlying system dynamics are known.
However, in many modern applications the governing equations are unknown, and only scattered data is available.
In this setting, symmetries can reveal structure in complicated high-dimensional datasets.
Despite its importance, a rigorous and practical framework to study symmetries directly from data remains underdeveloped.

In this work, we bridge this gap by developing a numerical framework for identifying symmetries directly from scattered data sampled on an unknown manifold. 
Our method employs a manifold learning technique, Generalized Moving Least Squares (GMLS), to numerically prolongate the scattered data, after which the infinitesimal invariance condition is discretized into a linear system encoding the underlying Lie point symmetries.
Conventionally, analyzing the invariance condition analytically requires knowledge of the underlying vector field and its gradient.
One key contribution in this work is to interpret these gradients as the normal vector of the solution manifold of the DEs, which can be directly estimated from the prolongated data by GMLS without knowledge of the underlying dynamics.
Building on GMLS, the second key contribution of this work is to establish and numerically verify theoretical convergence rates for our algorithm, which are largely absent in the literature \cite{hu2025explicit}.
We demonstrated the effectiveness of this approach on both ordinary and partial differential equations, showing that it can distinguish between symmetries associated with particular solutions and those shared across families of solutions.

Our framework is built around the equation-free estimation of the gradients of the DEs.
These gradients also play a key role in the analysis of more general types of symmetries, such as contact and generalized (Lie–B\"{a}cklund) symmetries where the invariant transformation involves higher-order derivatives, and conditional symmetries where the DE solution is only partially observed \cite{cantwell2002introduction,olver1993applications}.
A natural next step is to extend the present framework to the case of these more general symmetries.

Symmetries and scaling laws are often prevalent in real-world applications where the underlying dynamics are either unknown or too complicated to work with directly, such as fluid mechanics and other multi-disciplinary problems  \cite{huang2020multi}.
However, such data is noisy, due to numerical errors in DE solutions or measurement errors, and often highly nonlinear.
Extending the present framework to (i) ensure robustness in the presence of noise and (ii) capture nonlinear representations of infinitesimal generators would provide a promising approach for discovering symmetries in real-world systems. 

\section*{Acknowledgment}

This work is partially supported under the NSF grant DMS-2505605 and the ICDS Penn State seed grant. The research of JH is partially supported by the ONR grant N000142212193.

\appendix

\section{Lie group theory}\label{Appendix: Lie group theory definitions}

In this section, we recall several useful definitions in Lie group theory for the benefit of the unfamiliar reader.

\begin{definition}
An \textbf{r-parameter local Lie group} is a smooth ($\mathcal{C}^\infty$), $r$-dimensional manifold $G$ equipped with:
\begin{description}
\item[(i)] a distinguished point $e\in G$ as the identity,
\item[(ii)] a smooth local multiplication $m:U\to G$, $m(x,y)=xy$, defined on an open neighborhood $U\subset G\times G$ of $(e,e)$, and
\item[(iii)] a smooth local inversion $i:V\to G$, $i(x)=x^{-1}$, defined on an open neighborhood $V\subset G$ of $e$.
\end{description}
Furthermore, there exists an open neighborhood $W \subset G$ of $e$ such that for all $x,y,z \in W$, the following three axioms hold whenever the expressions are defined: 
\begin{description}
\item[(iv)] \textit{(local associativity)}  $(xy)z = x(yz)$ when $(x,y),(xy,z),(y,z),(x,yz)\in U$,
\item[(v)] \textit{(local identity)} $ex=xe=x$ when $(e,x),(x,e)\in U$, and
\item[(vi)] \textit{(local inverse)} $x^{-1}x=xx^{-1}=e$ when $x\in V$ and $(x^{-1},x),(x,x^{-1})\in U$.
\end{description}
\end{definition}

\begin{definition}
A \textit{local group of transformations} on a smooth manifold $M$ consists of an open set $\{0\}\times M \subset \Omega\subset \bR^r\times M$ (here $0$ denotes the identity element) and a smooth map $\Phi:\Omega\to M, (a,x)\mapsto \Phi(a,x)$, satisfying the following properties:
\begin{description}
\item[(i)] \textit{(identity)} $\Phi(0,x)=x$ for all $x\in M$, and
\item[(ii)] \textit{(local composition)} there exists an open neighborhood $\{0\}\subset A\subset\bR^r$, a smooth local group law $\ast:A\times A\to A$, and a local inverse $(\cdot)^{-1}$, so that $\Phi(a,\Phi(b,x))=\Phi(a\ast b,x)$ and $\Phi(a^{-1},\Phi(a,x))=x,$ whenever all terms are defined.
\end{description}
\end{definition}

\section{Symmetry in algebraic equations}\label{Appendix: symmetry algebraic equations}

Here, we review symmetry in algebraic equations, which provides a foundation for the study of symmetries in DEs.

Let $G$ be a local group of transformations acting on a manifold $M$.
We will use the shorthand notation $g\cdot x\equiv \Phi(g,x)$ for $g\in G$ and $x\in M$.
A subset $\mathscr{S}\subset M$ is called \textit{G-invariant}, and $G$ is called a \textit{symmetry group} of $\mathscr{S}$, if $g\cdot x\in \mathscr{S}$ for all $x\in \mathscr{S}$ and $g\in G$.
Furthermore, a function $F:M\to M'$, where $M'$ is another manifold, is called a \textit{G-invariant function} if $F(g\cdot x) = F(x)$ for all $x\in M$ and all $g\in G$.
The following theorem can be found in \cite{olver1993applications}.

\begin{theorem}\label{Theorem: algebraic symmetry}
    Let $G$ be a connected local Lie group of transformations acting on the $d$-dimensional manifold $M$. Let $F:M\to \mathbb{R}^q$, $q\leq d$, define a system of algebraic equations
    \begin{equation}
        F_\nu(x) = 0, \quad \nu = 1,\dots,q,\notag
    \end{equation}
    and assume that the system is of maximal rank, meaning that the Jacobian matrix $(\partial F_\nu/\partial_{x_k})$ is of rank $q$ at every solution $x$ of the system.
    Then $G$ is a symmetry group of the system if and only if
    \begin{equation}
        X_g[F_\nu](x) = 0,\quad \nu = 1,\dots,q, \quad \text{ whenever } F_\nu(x)=0,
    \end{equation}
    for every infinitesimal generator $X_g$ of $G$.
\end{theorem}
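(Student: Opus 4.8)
The plan is to establish the biconditional by proving each implication separately, with the forward direction following from a single differentiation and the converse requiring a local coordinate straightening driven by the maximal rank hypothesis.

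For the forward direction, I would suppose $G$ is a symmetry group of the solution set $\mathscr{S} = \{x \in M : F_\nu(x) = 0,\ \nu = 1,\dots,q\}$. Fix $x \in \mathscr{S}$ and an arbitrary infinitesimal generator $X_g$, and consider the curve $t \mapsto \Phi(tg, x)$ tracing the one-parameter subgroup in direction $g$. By $G$-invariance this curve lies entirely in $\mathscr{S}$, so $F_\nu(\Phi(tg, x)) = 0$ identically for $t$ near $0$. Differentiating at $t = 0$ and applying the chain rule with the coordinate expression $\xi_i(x) = \partial_t \Phi_i|_{t=0}$ from \eqref{eqn_gen} yields $X_g[F_\nu](x) = \sum_{i} \xi_i(x)\,\partial_{x_i} F_\nu(x) = 0$, which is precisely the infinitesimal criterion. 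This direction needs neither maximal rank nor connectedness.

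For the converse, I would first invoke connectedness of $G$ to reduce the claim to invariance under each one-parameter subgroup: a connected local Lie group is generated by a neighborhood of the identity, and elements near the identity are reached by flows $\Phi(tg, \cdot)$, so it suffices to show each such flow maps $\mathscr{S}$ into itself. The crucial tool is the maximal rank hypothesis: at any $x \in \mathscr{S}$ the gradients $\nabla F_1, \dots, \nabla F_q$ are linearly independent, so by the implicit function theorem the functions $w_\nu = F_\nu$ can be completed to local coordinates $(y_1, \dots, y_{d-q}, w_1, \dots, w_q)$ in which $\mathscr{S} = \{w = 0\}$. Writing $X_g = \sum_j \alpha_j\,\partial_{y_j} + \sum_\nu \beta_\nu\,\partial_{w_\nu}$ in these coordinates, the identity $\beta_\nu = X_g[w_\nu] = X_g[F_\nu]$ shows that the infinitesimal criterion is exactly the statement $\beta_\nu(y, 0) = 0$. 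The flow of $X_g$ from a point $(y_0, 0)$ then solves $\dot y_j = \alpha_j(y, w)$, $\dot w_\nu = \beta_\nu(y, w)$; since $\beta_\nu(y, 0) = 0$, the curve $(y(t), 0)$ with $\dot y = \alpha(y, 0)$ is a trajectory lying entirely in $\mathscr{S}$, and by uniqueness of solutions to the smooth initial value problem it is the unique trajectory through $(y_0, 0)$, forcing $w(t) \equiv 0$.

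The main obstacle is the converse, and specifically the fact that the infinitesimal criterion is only assumed to hold \emph{on} $\mathscr{S}$ rather than on an ambient neighborhood, so one cannot integrate it naively. The maximal rank condition is exactly what dissolves this difficulty: it converts the on-$\mathscr{S}$ vanishing of $X_g[F_\nu]$ into the vanishing of the normal components $\beta_\nu$ along the straightened set $\{w = 0\}$, after which ODE uniqueness pins the flow to the set. Secondary care is needed because the group action is only local — flows exist for small $t$ within coordinate patches — so the invariance argument is run patch-by-patch and then globalized using connectedness of $G$.
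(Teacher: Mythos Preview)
The paper does not prove this theorem at all: it is quoted as a background result with a citation to Olver's textbook \cite{olver1993applications} and then immediately illustrated with an example, so there is no in-paper argument to compare against. Your proposal is a correct outline and is in fact the standard proof (essentially the one Olver gives): the forward implication by differentiating $F_\nu(\Phi(tg,x))\equiv 0$ along the flow, and the converse by using maximal rank to straighten $\mathscr{S}$ locally to a coordinate slice $\{w=0\}$, reading the infinitesimal criterion as vanishing of the normal components of $X_g$ on that slice, and invoking ODE uniqueness to trap the flow there. Your remarks on why maximal rank is needed only for the converse, and on handling locality via connectedness, are also accurate.
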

Theorem \ref{Theorem: algebraic symmetry} provides a means to study the symmetry of systems of algebraic equations, as we now illustrate.

\begin{example}
Recall the Lie group $SO(2)$ group, with infinitesimal generator $X_g(x_1,x_2)=-x_2\partial_{x_1} + x_1\partial_{x_2}$.
Consider the algebraic equation
\begin{equation}
    F(x_1,x_2) = x_1^4 + x_1^2x_2^2 + x_2^2 - 1 = 0.\notag
\end{equation}
A straightforward computation shows that $X_g[F]=0$ whenever $F(x_1,x_2)=0$.  Furthermore, the Jacobian matrix 
\begin{equation}
    DF(x_1,x_2) = \begin{bmatrix}
        4x_1^3+2x_1x_2^2 & 2x_1^2x_2+2x_2
    \end{bmatrix},\notag
\end{equation}
is rank 1 except when $x_1=x_2=0$, which does not satisfy $F(x_1,x_2)=0$.
Therefore, the maximal rank condition is satisfied, and we conclude that the zero set of $F(x_1,x_2)$ exhibits $SO(2)$ rotational symmetry.

This symmetry is also evident from the factorization $F(x_1,x_2) = (x_1^2+1)(x_1^2+x_2^2-1) = 0$, which shows that the zero set is the unit circle, and is invariant under rotations.
\end{example}

\section{Illustrative example of Lie symmetry computation}\label{Appendix: example}

In this section, we clarify the notation introduced in \S\ref{subsec: discovering symmetries in DEs} by working through a concrete example illustrating the computation of Lie symmetries for a simple differential equation.

\begin{example}\label{ex: simple}
    Consider the system
    \begin{equation}
        \Delta(x,u,u_x) = u - u_x = 0.\notag
    \end{equation}
    We adopt a linear ansatz for the infinitesimal generator
    \begin{equation}
\begin{split}
    X_g &= \xi_1(x,u)\partial_x + \eta_1(x,u)\partial_u
    \\
    &\equiv (c_0+c_1 x+c_2u)\partial_x + (c_3+c_4x+c_5 u)\partial_u, \notag
\end{split}
\end{equation}
and so we write
\begin{equation}
    \Psi(x,u) = \begin{bmatrix}
        1 & x & u & 0 & 0 & 0
        \\
        0 & 0 & 0 & 1 & x & u
    \end{bmatrix}, \quad c = \begin{bmatrix}
        c_0 & c_1 & c_2 & c_3 & c_4 & c_5
    \end{bmatrix}^\top. \notag
\end{equation}
The form of the prolongated generator is completely determined by the generator in original coordinates.
According to equation (2.3)
, we write
\begin{equation}
    X_g^{(1)} = X_g + \eta_{1,1}(x,u,u_x)\ppf{}{u_x},\notag
\end{equation}
with
\begin{equation}
    \eta_{1,1}(x,u,u_x) = (\eta_1)_x + (\eta_1)_u u_x - (\xi_1)_x u_x - (\xi_1)_u (u_x)^2 = c_4 + c_5 u_x - c_1 u_x - c_2 (u_x)^2.\notag
\end{equation}
Therefore,
\begin{equation}
    L^{(1)}\Psi = \begin{bmatrix}
        1 & x & u & 0 & 0 & 0
        \\
        0 & 0 & 0 & 1 & x & u
        \\
        0 & -u_x & -u_x^2 & 0 & 1 & u_x
    \end{bmatrix}, \quad \nabla^{(1)}[\Delta] = \begin{bmatrix}
        0 \\ 1 \\ -1
    \end{bmatrix}.\notag
\end{equation}
We find that along the constraint $\Delta = 0$,
\begin{equation}
    P = (L^{(1)}\Psi)^\top \nabla^{(1)}[\Delta] = \begin{bmatrix}
        0 & -x & -u+u^2 & 1 & x-1 & 0
    \end{bmatrix}^\top.\notag
\end{equation}
The nullspace of $P$ is 
\begin{equation}
    \text{null }P = \Big\{\begin{bmatrix}
        1 & 0 & 0 & 0 & 0 & 0
    \end{bmatrix}, \begin{bmatrix}
        0 & 0 & 0 & 0 & 0 & 1
    \end{bmatrix}\Big\},\notag
\end{equation}
which recovers two independent infinitesimal generators
\begin{equation}\label{eq: gen}
    \begin{split}
        X_g^{(1)} &= c_0 \partial_x, \quad X_g^{(2)} = (c_5 u) \partial_u.\notag
    \end{split}
\end{equation}
The generators in \eqref{eq: gen} can be interpreted as a system of ODEs with initial condition $\tilde x(0) = x$ and $\tilde u(0) = u$
\begin{equation}
    \begin{split}
        \frac{d\tilde x}{ds} &= c_0, \quad
        \frac{d\tilde u}{ds} = c_5 u,\notag
    \end{split}
\end{equation}
with solutions $\tilde x(s) = c_0s + x$ and $\tilde u(s) = u e^{c_5 s}$.
A straightforward computation verifies that each perturbation $x \to \tilde x(s)$ and $u \to \tilde u(s)$ maps solutions of $\Delta$ to solutions of $\Delta$ (independently of each other).
That is, ``translation in $x$ (time)'' and ''multiplicative scaling'' are symmetries of the zero set of $\Delta$, $u(x)=e^x$.
\end{example}

\section{GMLS algorithm}\label{Appendix: GMLS alg}

Here, we provide Algorithm \ref{alg:gmls}, which describes a GMLS procedure adapted from \cite{zhang2025geometric}.

\begin{algorithm}
\caption{Generalized Moving Least Squares Algorithm}\label{alg:gmls}
\begin{algorithmic}[1]
\Require 
Distinct data points $\{\mathbf y_i\}_{i=1}^N$ on a $d$-dimensional manifold, the stencil $\mathcal{K}_i=\{\mathbf y_{i_j}\}_{j=1}^{\mathfrak K}$ of 
$\mathfrak{K}\ll N$ nearest neighbors to $\mathbf y_i$, 
the SVD bases 
$\{\tilde{\mathbf T}_i,\tilde{\mathbf N}_i\}_{i=1}^N$ of the local tangent and normal spaces,  the threshold 
$\epsilon=10^{-12}$.

\For{$i\in\{1,\cdots,N\}$}
\State Construct the matrix $\bm{\mathcal{Q}}_i\in\mathbb{R}^{n\times {\mathfrak K}}$, whose columns define $\bm{\tau}_w$ and $\mathbf{s}_w$, for $w=1,2,\dots,\mathfrak K$.

\State Construct a Vandermonde-like matrix of monomials, $\bm\Xi_i$, and collect the normal deviations in a matrix $\mathbf{S}_i$:

\[
\bm\Xi_i = \begin{bmatrix}
    \phi_1(\bm\tau_1) & \dots & \phi_Y(\bm\tau_1)
    \\
    \vdots & & \vdots
    \\
    \phi_1(\bm\tau_{\mathfrak K}) & \dots & \phi_Y(\bm\tau_{\mathfrak K})
\end{bmatrix} \in \mathbb{R}^{\mathfrak K\times Y}, \quad \mathbf{S}_i = \begin{bmatrix}
    \mathbf{s}_1^\top 
    \\
    \vdots 
    \\
    \mathbf{s}_{\mathfrak K}^\top 
\end{bmatrix} \in \mathbb{R}^{\mathfrak K\times (n-d)}.
\]

\State \parbox[t]{\dimexpr\linewidth-\algorithmicindent}{
Obtain the coefficients $\mathbf{B}_i=\{\beta_{i,j}^{(r)}\}$, with  $r=1,\dots,n-d$ and $j=1,\dots,Y$ of the intrinsic polynomial $\tilde{\pi}_i(\zeta)$

\[
\mathbf{B}_i=(\bm\Xi_i^\top\bm\Xi_i)^{-1}\bm\Xi_i^\top \mathbf{S}_i.
\]
where the $r$th column of $\mathbf{B}_i$ gives $\beta_{i,j}^{(r)}$ for $j=1,\dots,Y$.
}

\State \parbox[t]{\dimexpr\linewidth-\algorithmicindent}{%
Construct the improved approximation of the local tangent vectors with

\[\hat{\mathbf T}_i(\bm{\tau})=\tilde{\mathbf T}_i+\tilde{\mathbf N}_i\Big(D\tilde{\pi}_i(\bm{\tau})\Big).
\]

 Compute the updated normal vectors: $\hat{\mathbf N}_i = \text{null}(\hat{\mathbf{T}}_i^\top)$.}

 \State The next steps improve the estimation of $\mathbf{B}_i$.
 \While{$\|D \tilde \pi_i(0)\|_2>\epsilon$}
\State \parbox[t]{\dimexpr\linewidth-\algorithmicindent}{%
Update the tangent-normal frame with the previously computed GMLS approximation: $\tilde{\mathbf T}_i,\tilde{\mathbf N}_i \leftarrow \hat{\mathbf T}_i,\hat{\mathbf N}_i$.

Repeat steps 2 and 3.

Recompute the coefficients $\mathbf{B}_i$ in the updated tangent-normal frame

\[
\mathbf{B}_i \leftarrow (\bm\Xi_i^\top\bm\Xi_i)^{-1}\bm\Xi_i^\top \mathbf{S}_i,
\]

and construct the improved approximation as in Step 5 above with output 

\[
\hat{\mathbf T}_i(\bm{\tau})=\tilde{\mathbf T}_i+\tilde{\mathbf N}_i \Big(D\tilde \pi_i(\bm{\tau})\Big).
\]

Compute the updated normal vectors as before: $\hat{\mathbf N}_i = \text{null}(\hat{\mathbf{T}}_i^\top)$.
}

\EndWhile
\EndFor

\Ensure{Intrinsic polynomial approximation for local parametrization $\{\tilde \pi_i(\bm{\tau})\}_{i=1}^N$.}
\end{algorithmic}
\end{algorithm}

\section{Computational Complexity}\label{Appendix: complexity}

In this section, we provide details on the computational complexity our our numerical procedure (see Algorithm \ref{alg:summary}).
We assume that the prolongation coefficients $\eta_{i,J}$, defined in equations \eqref{eqn_pro} 
and \eqref{eq: prolongation formula},
have been precomputed.

We first present results for the $k$th prolongation step, i.e., approximating $\Tilde{\mathbf{z}}_i^{(k)}$ from $\Tilde{\mathbf{z}}_i^{(k-1)}$.
Our results are presented across all $N$ data points.
The first step is to approximate the tangent-normal frame via the SVD approach, which involves the following steps.
\begin{itemize}
    \item Constructing the nearest neighbor matrix using MATLAB's `knnsearch' syntax, which is at worst $\mathcal{O}(D_k N^2)$ and at best $\mathcal{O}(D_k N \log N)$, depending on the size of the data set.
    See \cite{friedman1977algorithm} for further details.
    \item Computing the distance matrix $\mathbf{D}_i \in \mathbb{R}^{D_k\times \mathfrak{K}}$ has complexity $\mathcal{O}(N D_k \mathfrak{K})$.
    \item Taking the SVD of the distance matrix has complexity $\mathcal{O}(N D_k \mathfrak{K} \min(D_k,\mathfrak{K}))$.
\end{itemize}
Therefore, the SVD step has complexity dominated by the nearest neighbor search $\mathcal{O}(D_k N^2)$.
The next step is implementing the GMLS refinement, say $\mathfrak J$ times, which involves the following steps.
\begin{itemize}
    \item Computing the projection $\mathbf{\mathcal{Q}}_i$, which is $\mathcal{O}(N \mathfrak{J} D_k \mathfrak{K})$.
    \item Forming the Vandermonde-like matrix of size $\mathfrak{K}\times Y$ for the least squares problem, which is $\mathcal{O}(d \ell Y \mathfrak{K} \mathfrak{J} N)$ ($d$ multiplications of tangent vectors, each raised to a maximum degree of $\ell$, repeated over $\mathfrak K$ rows and $Y$ columns).
    \item Solving the least squares problem \eqref{eq: least squares problem}, 
    which is $\mathcal{O}(N Y^2 \mathfrak{JK})$.
\end{itemize}
Each sub-step involving the GMLS algorithm is dominated by the $\mathcal{O}(D_k N^2)$ complexity of the nearest neighbor search.

The final step of the algorithm involves computing the SVD of $\Tilde{\mathbf{P}}\in \mathbb{R}^{K\times N(D_p-d)}$, which has complexity that depends linearly on $N$.
Therefore, the nearest neighbor search dominates the complexity of the algorithm.
Because we repeat the prolongation procedure $p$ times, we may pessimistically write that the complexity is at best $\mathcal{O}(p D_p N \log N)$, and at worst $\mathcal{O}(p D_p N^2)$.











\bibliographystyle{plain}  
\bibliography{arxiv}

\end{document}